 \newtheorem{thm}{Theorem}[section]
 \newtheorem{cor}[thm]{Corollary}
 \newtheorem{lem}[thm]{Lemma}
 \newtheorem{prop}[thm]{Proposition}
 \theoremstyle{definition}
 \newtheorem{defn}[thm]{Definition}
 \theoremstyle{remark}
 \newtheorem{rem}[thm]{Remark}
 \numberwithin{equation}{section}
\begin{document}

\title[Gr\"uss inequalities]
{Gr\"uss inequalities for the $\beta-$integral associated \\
with the general quantum operator
}

\author[Cardoso \& Haque \& Macedo]{J.L. Cardoso, N. Haque \& {\^A}. Macedo}

\address{Centro de Matem{\'a}tica, Universidade
do Minho - Polo CMAT-UTAD \\
Dep. de Matem\'atica da Escola de Ci\^encias e Tecnologia \\
Universidade de Tr\'as-os-Montes e Alto Douro (UTAD) \\
Polo
Quinta de Prados \\
5001-801 Vila Real \\
Portugal \\
ORCID ID: 0000-0002-7418-3634
\and
Lalbagh Singhi High Shool \\
Lalbagh, Murshidabad \\
West Bengal - 742149 India \\
\and Centro de Matem{\'a}tica, Universidade do Minho - Polo CMAT-UTAD \\
Dep. de Matem\'atica da Escola de Ci\^encias e Tecnologia \\
Universidade de Tr\'as-os-Montes e Alto Douro (UTAD) \\
Quinta de Prados \\
5001-801 Vila Real \\
Portugal \\
ORCID ID: 0000-0002-5867-3545}
\email{jluis@utad.pt \& nazrul@rkmvccrahara.org \& amacedo@utad.pt}

\thanks{This research was partially supported by Portuguese Funds through FCT
(Funda\c c\~ao para a Ci\^encia e a Tecnologia) within the Projects
UIDB/00013/2020 and UIDP/00013/2020.}

\subjclass{Primary 26D15; Secondary 26A42}

\keywords{Gr\"uss inequalities, Gr\"uss-type inequalities, general quantum operator,
$\beta$-difference operator, $\beta$-derivative, quantum derivative, $\beta$-integral}

\date{September 14, 2024}

\begin{abstract}
Assume that $\,I\subseteq\mathbb{R}\,$ is an interval and
$\,\beta:\,I\rightarrow\,I\,$ a strictly increasing and continuous
function with a single fixed point $\,s_0\in I\,$, satisfying
$\,(s_0-t)(\beta(t)-t)\leq 0\,$ for all $\,t\in I$, where the
equality occurs only when $\,t=s_0$.

\noindent Hamza et al. considered the general quantum operator,
$\,D_{\beta}[f](t):=\displaystyle\frac{f\big(\beta(t)\big)-f(t)}{\beta(t)-t}\,$
when $\,t\neq s_0\,$ and $\,D_{\beta}[f](s_0):=f^{\prime}(s_0)\,$ when
$\,t=s_0\,$. It generalizes the Jackson $\,q$-derivative operator $\,D_{q}\,$ as
well as the Hahn (quantum derivative) operator, $\,D_{q,\omega}$.

\noindent We obtained Gr\"uss type inequalities for its inverse operator, the
$\beta$-integral. Furthermore, we introduced the concept of
$\,\beta$-Riemann-Stieltjes integral and obtained Gr\"uss type
inequalities associated with it.
\end{abstract}

\maketitle

\section{Introduction}
In the context of the classical Riemann or the Lebesgue integrals, in 1935
\cite{G:1935}, G. Gr\"uss proved that
\begin{equation}\label{Gruss-inequality-product}
\left|\frac{1}{b-a}\int_{a}^{b}f(x)\,g(x)\,dx-
\frac{1}{b-a}\int_{a}^{b}f(x)\,dx\,\frac{1}{b-a}\int_{a}^{b}g(x)\,dx\right|
\leq \frac{1}{4}(M-m)(N-n)
\end{equation}
whenever $\,f\,$ and $\,g\,$ are integrable functions in $\,[a,b]\,$ which satisfy
\begin{equation*}
m\leq f(x)\leq M\quad {\mbox and}\quad n\leq g(x)\leq N\,,
\quad\mbox{for all}\;\:x\in [a,b]\,.
\end{equation*}
New proofs of this result can be found, for instance, in \cite[p. 296]{MPF:1993},
\cite[p. 162]{MPU:2000} or \cite[p. 436]{LMR:2002}. Other publications that offer
improvements or generalizations of the Gr\"uss inequality, or present new results
within the framework of the classical Riemann-Stieltjes integral, can be found in
the literature \cite{D:1998, F:1999, D-Stieltjes:2003, PT:2005, CD:2007, EMP:2007,
AGR:2011, AD-Lipschitz:2014, AD-Monotonic:2014}.
Several applications of this inequality are also documented. For instance,
in \cite{DW:1997}, an estimation of error bounds for special
means and some numerical quadrature rules are provided. In \cite{DF:1998}, some
applications to special means are explored, while
in \cite{D:1999}, applications for linear positive functionals and
integrals are discussed. A variety of bounds for the expectation and variance of a
random variable can be found in \cite{CD:2001, BDA:2002, L:2007}.
In \cite{CS:2002}, the authors used a variant of Gr\"uss inequality to obtain
new perturbed trapezoid inequalities. A connection between the Gr\"uss inequality
and the error of best approximation is established in \cite{LMR:2002}.
In \cite{DG:2002}, several examples with inequalities are shown, some
involving probability density functions, and others involving infinite numerical sums.
In \cite{D:2003}, applications for deriving new integral inequalities were achieved,
along with some applications to quadrature rules.
Additionally, in \cite{G:2004}, a $\,q$-Gr\"uss inequality is exhibited in the
restricted $\,q$-integral setting.
In \cite{AGR:2011}, after obtaining a Gr\"uss inequality for a positive linear
operator, the authors applied it to the Berstein operator, to Hermite-Fej{\'e}r
interpolation operator and to convolution-type operators.
In \cite{AD-Monotonic:2014}, Gr\"uss type inequalities were applied in the context
of the classical Riemann-Stieltjes integral for functions of self-adjoint operators
on complex Hilbert spaces.
In \cite{TN:2014}, a $\,q$-Gr\"uss inequality was obtained for Lipschitz functions.
In \cite{LY:2016}, Gr\"uss-type quantum integral inequalities were found
and applied to obtain some related quantum integral inequalities.
In \cite{FMYAEA:2024}, Gr\"uss inequalities were used to generalize some well-known
integral inequalities, such as the Ostrowski, Ostrowski-Gr\"uss, and
Hermite-Hadamard inequalities. Several authors have also published results involving
Ostrowski-Gr\"uss type inequalities, for instance, see \cite{MPU:2000, C:2001}.

The purpose of this work is to obtain new results involving Gr\"uss-type inequalities
in the context of the quantum calculus. To achieve this, we will follow a unified
approach by using the so-called $\,\beta$-difference operator, which will be
introduced later in Subsection \ref{Subsect-2.1}.

The $\beta$-operator $\,D_{\beta}\,$, considered in the abstract, was introduced in
\cite{HSSA:2015}, along with the corresponding general quantum difference calculus.
It is a generalization of the $(q,\omega)$-derivative operator (Hahn's quantum
operator), given by
\begin{equation}\label{Hahn-operator}
D_{q,\omega}[f](x):=\frac{f\big(qx+\omega\big)-f(x)}{(q-1)x+\omega}\,,
\end{equation}
which itself is a generalization of both the Jackson $\,q$-derivative
\begin{equation}\label{Jackson-operator}
D_{q}[f](x):=\frac{f(qx)-f(x)}{(q-1)x}\,,
\end{equation}
and the (forward difference) $\,\omega$-derivative
$$
\triangle_{\omega}[f](x):=\frac{f(x+\omega)-f(x)}{\omega}\,,
$$
where $\,0<q<1$ and $\,\omega>0\,$ are fixed parameters.

Of particular importance are the corresponding inverse operators, namely, respectively:
the Jackson-Thomae-N\"orlund
$\,(q,\omega)$-integral
\begin{equation*}
\int_a^b f\,{\rm d}_{q,\omega}:=
\int_{\omega_0}^b f\,{\rm d}_{q,\omega}-\int_{\omega_0}^a f\,{\rm d}_{q,\omega}\,,
\end{equation*}
the Jackson $\,q$-integral
\begin{equation*}
\int_{a}^{b}f\,{\rm d}_q:=(1-q)\sum_{k=0}^{+\infty}\big[bf(bq^k)-af(aq^k)\big]q^k
\end{equation*}
and the N\"orlund integral
$$
\int_{a}^{b}\!f\,\triangle_{\omega}:=\omega\sum_{k=0}^{+\infty}\big[f(b+k\omega)-f(a+k\omega)\big]\,.
$$
To find out more about this subject see, for example, \cite{AHA:2012}.

These difference operators, as well as their inverse operators, are relevant in pure
mathematics likewise in applied mathematics, being an appeal for multiple investigators
with research production in a wide variety of related subject-matter, such as,
but not confined to,
the quantum calculus, 
the quantum variational calculus, 
$\,q$-difference equations properties, 
Sturm-Liouville problems, Paley-Wiener results, Sampling theory,
$\,q-$exponential, trigonometric, hyperbolic and other significant
families of functions associated with Fourier developments and
related properties, combined with their orthogonality
characteristic, functional spaces bonded with inner-products
linked with basic integrals. For more on these topics, see, for
instance, \cite{C:2011, CP:2015, C:2018, AHM:2018, AH:2018,
H:2019, AA-NC:2019} and references therein.
The above operators find their true origin in problems related with the theory of
orthogonal polynomials. Recently, in \cite{ACMP:2020, ACMP:2022}, and using the
arguments of \cite{CM:2020}, it was proved, in a unified way, that the classical
orthogonal polynomials with respect to the Hahn's and the Jackson's operators are
special or limiting cases of a four parametric family of $\,q$-polynomials.

Several publications have addressed the general quantum operator $\,D_{\beta}\,$ or
its inverse.
In 2015, $\,\beta$-H\"older, $\,\beta$-Minkowski, $\,\beta$-Gronwall,
$\,\beta$-Bernoulli and $\,\beta$-Lyapunov inequalities were exhibited \cite{HS:2015}.
In 2016, the existence and uniqueness of solutions of general
quantum difference equations were proven \cite{HS:2016}.
In \cite{HSS:2017}, the exponential, trigonometric and hyperbolic functions were
introduced, while in \cite{FSZ:2018}, the $n$th-order linear general quantum difference
equations were studied. The general quantum variational calculus was developed in \cite{CM:2018},
and a general quantum Laplace transform was designed in \cite{SFZ:2020}.
In \cite{C:2021}, properties of the $\beta$-Lebesgue spaces were generated, while
\cite{KSC:2022} presented the directional derivative within the framework of quantum
calculus.
Hardy-type inequalities were established in \cite{HAA:2023},
Hermite-Hamadard type inequalities were derived in \cite{CS:2024}, and the Taylor
theory within the general quantum setting was developed in \cite{GT:2023, SZ:2024}.

In section \ref{section-2}, we recall the definition of the $\beta$-difference
operator and its inverse, the $\beta$-integral, together with some of
their properties. Section \ref{section-3} is devoted to the $\beta$-Gr\"uss-type
inequality and various approaches to proving it, while in Section \ref{section-4},
we introduce the concept of $\,\beta$-Riemann-Stieltjes integral and obtain multiple
Gr\"uss-type inequalities in this context.

The outcome of this work can be found in sections \ref{section-3}, \ref{section-4}
and \ref{appls}, mainly in the first two of them.
We believe that Theorem \ref{Gruss-inequality-theorem}, Proposition
\ref{positivity-proposition}, Lemma \ref{beta-Korkine},
Propositions \ref{double-Holder-inequality} and \ref{kind-of-Cauchy-Schwarz-proposition},
Corollary \ref{lemma-functional}, Theorem \ref{Gruss-type-inequality-theorem},
Proposition \ref{Abs-integral}, Theorem \ref{Gruss-type-Riemann-Stieltjes-theorem},
Corollaries \ref{corollary-1}, \ref{corollary-2}, \ref{corollary-3}, \ref{corollary-4},
\ref{corollary-5}, \ref{corollary-6}, \ref{Gruss-type-ineq-Jackson} and
\ref{Gruss-type-ineq-Jackson-Thomae}, are original contributions, as well as the
application results obtained in section \ref{appls}.
In our opinion, the most important result is Theorem
\ref{Gruss-type-Riemann-Stieltjes-theorem}.

\section{The $\beta$-difference operator and the $\beta$-integral}\label{section-2}

\subsection{The $\beta$-difference operator}\label{Subsect-2.1}
Henceforth, $\,I\subseteq\mathbb{R}\,$ represents an interval and
$\,\beta:\,I\rightarrow\,I\,$ is a strictly increasing and continuous function,
with a single fixed point $\,s_0\in I\,$, satisfying
\begin{equation}\label{condition1}
(t-s_0)(\beta(t)-t)\leq 0
\end{equation}
for all $\,t\in I$, where the equality is verified only if $\,t=s_0\,$.

Being $\,f:\,I\rightarrow\,\mathbb{K}\,$ a function where
$\,\mathbb{K}\,$ is either $\,\mathbb{R}\,$ or $\,\mathbb{C}\,$
\footnote{In fact, $\mathbb{K}=\mathbb{X}$ can represent any Banach space
\cite[p.2]{HSSA:2015}},
Hamza et al. \cite{HSSA:2015} established the general quantum difference operator
\begin{equation}\label{beta-derivative}
D_{\beta}[f](t):=\left\{
\begin{array}{ccl}
\displaystyle\frac{f\big(\beta(t)\big)-f(t)}{\beta(t)-t} & \mbox{\rm if} &
t\neq s_0\,, \\ [1.2em] f^{\prime}(s_0) & \mbox{\rm if} &
t=s_0\; .
\end{array}\right.,
\end{equation}
presuming that $\,f^{\prime}(s_0)\,$ exists. $\,D_{\beta}[f](t)\,$ is
called the $\beta$-derivative of $\,f\,$ at $\,t\in I$.
If $\,f^{\prime}(s_0)\,$ exists then $\,f\,$ is said to be $\beta$-differentiable on $\,I\,$.
Sometimes we will represent it by $\,\left(D_{\beta}f\right)(t)\,$.

In the case where $\,\beta(t)=qt+\omega$, one gets the Hahn operator
(\ref{Hahn-operator}), being $\,s_0=\frac{\omega}{1-q}\,$ its fixed point.

We mention that it is feasible to replace the above restriction (\ref{condition1})
by $\,(t-s_0)(\beta(t)-t)\geq 0\,$ for $\,t\in I$.

An initiation to the $\,\beta$-calculus associated with this general quantum
difference operator can be found in \cite{HSSA:2015}.

\subsection{The $\beta$-integral}\label{beta-integral}

Taking
$\,\beta^k(t):=(\underbrace{\beta\circ\beta\circ\ldots\circ\beta}_{k\,\mbox{times}})(t)\,$
for $\,t\in I\,$ and $\,k\in \mathbb{N}_0=\mathbb{N}\cup\{0\}$, with $\,\beta^0(t):=t,$
we can build-up the $\,\beta$-interval with extreme points $\,a\,$ and $\,b,$
\begin{equation*}
[a,b]_{\beta}:=\big\{\,\beta^n(x)\; |\; (x,n)\in
\{a,b\}\times\mathbb{N}_0\,\big\} \cup\{s_0\}.
\end{equation*}
Clearly
$$
a,b\in I\quad\Rightarrow\quad [a,b]_{\beta}\subset I\,,
$$
whensoever $\,a\,$ and $\,b$ are real numbers.

\vspace{0.5em}
The ensuing proposition is stated in \cite[Lemma 2.1, page 3]{HSSA:2015}.

\noindent
\textbf{Proposition A}.
The sequence of functions $\,\left\{\beta^k(t)\right\}_{k\in\mathbb{N}_0}\,$ converges
uniformly to the constant function $\,\hat{\beta}(t):=s_0\,$ on every compact interval
$\,J\subset\,I\,$ containing $\,s_0$.

\vspace{0.5em}
The quantum difference inverse operator, the $\beta$-integral, with $\,a,b\in I,$
is defined by
\begin{equation}\label{beta-integral-a-b}
\int_{a}^{b}f\,{\rm d}_{\beta}:=\int_{s_0}^{b}f\,{\rm d}_{\beta}-\int_{s_0}^{a}f\,{\rm d}_{\beta}
\end{equation}
where
\begin{equation}\label{beta-integral-s0-x}
\int_{s_0}^{x}f\,{\rm d}_{\beta}:=
\sum_{k=0}^{+\infty}\Big(\beta^k(x)-\beta^{k+1}(x)\Big)f\big(\beta^{k}(x)\big)\,.
\end{equation}
Consequently,
\begin{equation}\label{beta-integral-a-b-explicitly}
\int_{a}^{b}f\,{\rm d}_{\beta}=\sum_{k=0}^{+\infty}\Big(\beta^k(b)-\beta^{k+1}(b)\Big)f\big(\beta^{k}(b)\big)-
\sum_{k=0}^{+\infty}\Big(\beta^k(a)-\beta^{k+1}(a)\Big)f\big(\beta^{k}(a)\big)\,.
\end{equation}
Whenever the infinite sum on the right-hand side of (\ref{beta-integral-s0-x})
converges, we say that the function $\,f\,$ is $\,\beta$-integrable on $\,[s_0,x].$
The $\,\beta$-integral on the left-hand side of (\ref{beta-integral-a-b}) is well-defined
on condition that at least one of the $\,\beta$-integrals in the right-hand side is finite.
Moreover, we say that $\,f\,$ is $\,\beta$-integrable on $\,[a,b]\,$ if it is
$\,\beta$-integrable on both $\,[s_0,a]\,$ and $\,[s_0,b].$

\noindent The foremost specific cases are the Jackson-Thomae-N\"orlund integral,
where $\,\beta(t)=qt+\omega\,$ with $\,0<q<1\,$, $\,\omega\geq 0$, and
the Jackson $\,q$-integral, correlated with $\,\omega=0\,$ in the preceding case.
The corresponding fixed points are given by $\,s_0=\frac{\omega}{1-q}$ and $\,s_0=0\,$,
respectively.

Owing to the fact that
$\,\int_{a}^{b}f\,{\rm d}_{\beta}=-\int_{b}^{a}f\,{\rm d}_{\beta}\,$
(see \cite[Lemma 3.5, p.13]{HSSA:2015}), hereinafter we'll assume that $\,a<b\,$
when the $\,\beta$-integral $\,\int_{a}^{b}f\,{\rm d}_{\beta}\,$ is considered.

\subsection{Properties of the $\beta$-derivative and of the $\beta$-integral}

We return to the definition of the $\,\beta$-derivative operator
(\ref{beta-derivative}).

\noindent We remark that
$$
\lim_{\beta(t)\rightarrow t}D_{\beta}[f](t)=f^\prime(t)\,,
$$
whenever $\,f\,$ is differentiable at the point $\,t\in I$, thus, $\,D_{\beta}\,$
is a $\,\beta$-analogue of the classical derivative operator.

The $\,\beta$-derivative satisfies properties that may be viewed as
$\,\beta$-analogues which agree with the properties for the classical derivative.
To illustrate it, we remark that the quantum operator (\ref{beta-derivative}) is
linear, i.e., whenever $\,\alpha\,$ and $\,\beta\,$ are real or complex numbers,
\begin{equation*}
D_{\beta}[\alpha f+\beta g](t)=\alpha D_{\beta}[f](t)+\beta D_{\beta}[g](t)\,,
\end{equation*}
and satisfies the following $\,\beta$-product rule: for $\,t\in I,$
\begin{equation}\label{beta-product-rule}
\begin{array}{lll}
D_{\beta}[f\cdot g](t) & = & D_{\beta}[f](t)\cdot g(t)+f\big(\beta(t)\big)\cdot D_{\beta}[g](t) \\ [0.8em]
 & = & D_{\beta}[g](t)\cdot f(t)+g\big(\beta(t)\big)\cdot D_{\beta}[f](t)
 \end{array}
\end{equation}
when $\,f\,$ and $\,g\,$ are $\,\beta$-differentiable in $\,I$.
Besides, $\,f\,$ will be the constant function such that $\,f(t)=f(s_0)\,$ for all
$\,t\in I\,$ every time $\,D_{\beta}[f](t)=0\,$ for all $\,t\in I\,$.
For the above-mentioned and other properties of the general quantum difference
operator $\,D_{\beta}\,$ see \cite{HSSA:2015}.
These equalities hold for all $\,t\neq s_0$, and also for $\,t=s_0\,$ each and
whensoever $\,f^\prime(s_0)\,$ and $\,g^\prime(s_0)$ exist.

\subsubsection{The fundamental theorem of $\,\beta$-calculus}

Next assertion, a $\,\beta$-analogue of the fundamental theorem of calculus,
can be consulted in \cite{C:2021}.

\vspace{0.5em}
\textbf{Theorem B}.
Let $\,\beta:I\to I\,$ be a function satisfying the set-up delineated in subsection
\ref{Subsect-2.1}.
Fix $\,a,b\in I$ and let $\,f:I\rightarrow\mathbb{C}\,$ be a function such that
$\,D_{\beta}[f]\in\mathscr{L}_{\beta}^1[a,b]$. Then:
\begin{enumerate}
\item[{\rm (i)}] The equality
\begin{equation*}
\int_{a}^bD_{\beta}[f]\,{\rm d}_{\beta}=
\left[f(s)-\lim_{k\rightarrow+\infty}f\big(\beta^{k}(s)\big)\right]_{s=a}^b
\end{equation*}
holds, presuming the involved limits exist.
\item[{\rm (ii)}] Furthermore, assuming that $\,a<s_0<b,$ if $\,f\,$ has a discontinuity
of first kind at $\,s_0\,$ then
$$
\int_{a}^bD_{\beta}[f]\,{\rm d}_{\beta}=f(b)-f(a)-\Big(f(s_0^+)-f(s_0^-)\Big).
$$
Needless to say, if $\,f\,$ is continuous at $\,s_0\,$ then
$$
\int_{a}^bD_{\beta}[f]\,{\rm d}_{\beta}=f(b)-f(a)\,.
$$
\end{enumerate}

\subsubsection{The $\,\beta$-integration by parts formula}

Currently, we enunciate the $\,\beta$-analogue of the integration by parts formula
\cite{C:2021}.

\vspace{0.5em}
\textbf{Theorem C}.
Let $\,\beta:I\to I\,$ be a function satisfying the conditions described in subsection
\ref{Subsect-2.1}.
Fix $\,a,b\in I$ and two functions $\,f:I\rightarrow\mathbb{C}\,$ and
$\,g:I\rightarrow\mathbb{C}.$ Then:
$$
\int_{a}^b f\cdot D_{\beta}[g]\,{\rm d}_{\beta}=
\left[(f\cdot g)(s)-\lim_{k\rightarrow+\infty}(f\cdot g)\big(\beta^k(s)\big)\right]_{s=a}^b
-\int_{a}^b\big(g\circ\beta\big)\cdot D_{\beta}[f]\,{\rm d}_{\beta}
$$
holds, provided $\,f,g\in\mathscr{L}_{\beta}^1[a,b]$, $\,D_{\beta}[f]\,$ and $\,D_{\beta}[g]\,$
are bounded in $\,[a,b]_{\beta}$, and the limits exist.

Additionally, if $\,f\,$ and $\,g\,$ are continuous at $\,s_0\,$ then
$$
\int_{a}^b f\cdot D_{\beta}[g]\,{\rm d}_{\beta}=
\Big[ f\cdot g\Big]_{a}^b-\int_{a}^b \big(g\circ\beta\big)\cdot D_{\beta}[f]\,{\rm d}_{\beta}\,.
$$

\subsubsection{Conditions for the \emph{monotony} property of the $\,\beta$-integral}

Next proposition gives a sufficient condition to guarantee the \emph{monotony}
property of the $\,\beta$-integral. It can be found in \cite[Prop. 2, page 559]{C:2021}.

\vspace{0.5em}
\textbf{Proposition D}.
If $\,s_0\,$ is the fixed point of $\,\beta\,$ and $\,a\,$ and $\,b\,$ are elements
of $\,I\,$ such that $\,a\leq s_0\leq b,$ then
\begin{equation*}
f\leq g\;\:\mbox{in}\;\:[a,b]_{\beta}\:\Longrightarrow\:
\int_{a}^{b}f\,{\rm d}_{\beta}\leq\int_{a}^{b}g\,{\rm d}_{\beta}\,.
\end{equation*}

\subsection{The spaces $\,\mathscr{L}_{\beta}^p[a,b]\,$ and $\,L_{\beta}^p[a,b]$}

\subsubsection{The space $\,\mathscr{L}_{\beta}^p[a,b]$}

For $\,a,b\in I$, representing by \,$\mathscr{L}_{\beta}^p[a,b]\,$ the collection of
functions $\,f:I\rightarrow\mathbb{\mathbb{C}}\,$ such that $\,|f|^p\,$ is
$\,\beta$-integrable in $\,[a,b]\,$, i.e.,
$$\mathscr{L}_{\beta}^p[a,b]=\left\{f:I\rightarrow\mathbb{\mathbb{C}}\;\Big|\;
\int_{a}^b|f|^p{\rm d}_{\beta}<\infty\right\}\;.$$
We consider too
$$\mathscr{L}_{\beta}^\infty[a,b]=\left\{f:I\rightarrow\mathbb{\mathbb{C}}\;\Big|\;
\sup_{k\in\mathbb{N}_0}\Big\{\big|f\big(\beta^k(a)\big)\big|,\big|f\big(\beta^k(b)\big)\big|\Big\}<\infty\right\}\;.$$
In \cite[Corollary 3.4]{C:2021}, it was shown to be true that, whenever
$\,a\leq s_0\leq b\,$ and $\,1\leq p\leq\infty$, then the set
$\,\mathscr{L}_{\beta}^p[a,b]$, with the common operations of addition of functions
and multiplication of a function by a number (real or complex), becomes a linear space
over $\,\mathbb{K}$.

\subsubsection{The space $L_{\beta}^p[a,b]$}

For $\,f,g\in\mathscr{L}_{\beta}^p[a,b]$, put down $\,f\sim g\,$ when
\begin{equation}\label{EquivRel}
f\big(\beta^k(a)\big)=g\big(\beta^k(a)\big)\quad \mbox{and}\quad f\big(\beta^k(b)\big)=g\big(\beta^k(b)\big)
\end{equation}
holds for all $\,k=0,1,2,\cdots$, i.e., we declare that $\,f\sim g\,$ if $\,f=g\,$ in
$\,[a,b]_{\beta}$.
Plainly, $\,\sim\,$ defines an equivalence relation in
$\,\mathscr{L}_{\beta}^p[a,b]$.
We will denote by $\,L_{\beta}^p[a,b]\,$ the corresponding quotient set:
$$
L_{\beta}^p[a,b]:=\mathscr{L}_{\beta}^p[a,b]\big/\sim\;.
$$
The next theorems can be accessed in \cite{C:2021}. In the impending, $p^\prime$
represents the conjugate exponent of a real number $p\geq1$, i.e.,
$\frac{1}{p}+\frac{1}{p^\prime}=1$, with the stipulation that $p^\prime=\infty$ if $p=1$.
A suchlike affirmation can also be found in \cite{HS:2015}.

\vspace{0.5em}
\noindent\textbf{Theorem E}.
Suppose that $a\leq s_0\leq b$.
\begin{itemize}
\item If $1<p<\infty$, then
\begin{equation*}
\int_{a}^b\left|fg\right|\,{\rm d}_{\beta}\leq
\left(\int_{a}^b\left|f\right|^p{\rm d}_{\beta}\right)^{\frac{1}{p}}
\left(\int_{a}^b\left|g\right|^{p^\prime}{\rm d}_{\beta}\right)^{\frac{1}{p^\prime}}
\end{equation*}
holds every time
$f\in\mathscr{L}_{\beta}^p[a,b]$ and $g\in\mathscr{L}_{\beta}^{p^\prime}[a,b]$.
\item If $p=1$, then
$$\int_{a}^b\left|fg\right|{\rm d}_{\beta}\leq
\sup_{k\in\mathbb{N}_0}\Big\{\big|g\big(\beta^k(a)\big)\big|,\big|g\big(\beta^k(b)\big)\big|\Big\}
\int_a^b|f|\,{\rm d}_{\beta}$$
holds, on condition that $\,f\in\mathscr{L}_{\beta}^1[a,b]\,$ and
$\,g\in\mathscr{L}_{\beta}^\infty[a,b]$.
\end{itemize}

\vspace{0.5em}
\noindent\textbf{Theorem F}.
If $\,a\leq s_0\leq b\,$ and $\,1\leq p\leq \infty\,$ then
$\,L_{\beta}^{p}[a,b]\,$ is a normed linear space over $\,\mathbb{R}\,$ or
$\,\mathbb{C}\,$ with norm
\begin{equation}\label{beta-p-norm}
\|f\|_{L_{\beta}^p[a,b]}:=\left\{
\begin{array}{ccl}
\displaystyle\left(\int_a^b|f|^p\,{\rm d}_{\beta}\right)^{\frac 1 p}&{\rm if}& 1\leq p<\infty\; ; \\
\rule{0pt}{1.5em}
\displaystyle\sup_{k\in\mathbb{N}_0}\,\Big\{\left|f\big(\beta^k(a)\big)\right|,\left|f\big(\beta^k(b)\big)\right|\Big\} &{\rm if}& p=\infty\;.
\end{array}\right.
\end{equation}

\vspace{0.5em}
In the standard way, here $\,f\in\mathscr{L}_{\beta}^p[a,b]\,$ designates any
representative of the relative class of $\,L_{\beta}^p[a,b]\,$ to which it belongs.
We remark that, taking into account (\ref{beta-integral-a-b-explicitly}) and
(\ref{EquivRel}), the definition of the norm $\,\|f\|_{L_{\beta}^p[a,b]}\,$ is
independent of the selected representative.

\vspace{0.5em}
\noindent\textbf{Theorem G}.
If $\,a\leq s_0\leq b\,$ and $\,1\leq p\leq \infty$, then the ensuing holds:

{\rm (i)} $L_{\beta}^{p}[a,b],$ endowed with the norm (\ref{beta-p-norm}), is
a Banach space for $\,1\leq p\leq\infty$, which is separable if $\,1\leq p<\infty\,$
and reflexive if $\,1<p<\infty$.

{\rm (ii)} $L_{\beta}^2[a,b]$ is a Hilbert space in regard to the inner product
\begin{equation}\label{beta-inner-product}
\langle f,g\rangle_{\beta}:=\int_a^bf\,\overline{g}\,{\rm d}_{\beta}\; ,\quad f,g\in L_{\beta}^2[a,b]\; .
\end{equation}

\section{$\beta$-Gr\"uss-type inequality}\label{section-3}

In \cite[Theorem 1, p. 75]{D:1999}, S. S. Dragomir published a generalization of
the Gr\"uss-type inequality (\ref{Gruss-inequality-product}) within the context of
inner product spaces. The $\,\beta$-inner product (\ref{beta-inner-product}),
see (ii) of Theorem G,
\begin{equation*}
\langle f,g\rangle_{\beta}:=\int_a^bf\,\overline{g}\,{\rm d}_{\beta}\; ,\quad
f,g\in L_{\beta}^2[a,b]\,,
\end{equation*}
is the right one to be considered for the corresponding $\,\beta$-quantum setting.
So, for real or complex functions and within the $\,\beta$-quantum scenario, we can
state and prove the following $\,\beta$-Gr\"uss type inequality as a simple corollary
of Theorem 1 of \cite{D:1999}.
\begin{thm}\label{Gruss-inequality-theorem}
Let $\,f,\,g\,:\,[a,b]\to\mathbb{R}\,$ be absolutely $\,\beta$-integrable
functions on $\,[a,b]\,$, and suppose that there exist real numbers
$\,m\,$, $\,n\,$, $\,M\,$ and $\,N\,$ such that
\begin{equation}\label{inequalities-hypothesis}
m\leq f(x)\leq M\,,\quad n\leq g(x)\leq N,\quad \mbox{for all}\;\:x\in [a,b]_{\beta}\,.
\end{equation}
If  $\,a<s_0<b\,$, then
\begin{equation}\label{Gruss-inequality}
\left|\frac{1}{b-a}\int_{a}^{b}\!f(x)\,g(x)\,d_{\beta}x-
\frac{1}{(b-a)^2}\int_{a}^{b}\!f(x)\,d_{\beta}x\int_{a}^{b}\!g(x)\,d_{\beta}x\right|
\leq\frac{1}{4}(M-m)(N-n)
\end{equation}
and the constant $\,\frac{1}{4}\,$ is the best possible.
\end{thm}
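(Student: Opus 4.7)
My plan is to derive this as a direct application of Dragomir's inner-product Gr\"uss inequality (Theorem~1 of \cite{D:1999}) to the Hilbert space $L_{\beta}^{2}[a,b]$, whose existence is guaranteed by part~(ii) of Theorem~G. Recall that Dragomir's result says that in any inner-product space, if $e$ is a unit vector and $x,y$ satisfy $\operatorname{Re}\langle\Phi e-x,x-\phi e\rangle\ge 0$ and $\operatorname{Re}\langle\Gamma e-y,y-\gamma e\rangle\ge 0$, then
\begin{equation*}
\bigl|\langle x,y\rangle-\langle x,e\rangle\langle e,y\rangle\bigr|\le\tfrac{1}{4}(\Phi-\phi)(\Gamma-\gamma),
\end{equation*}
with constant $\tfrac14$ sharp.

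The first step is to produce a suitable unit vector. I would take the constant function $e\equiv(b-a)^{-1/2}$. To see that $\|e\|_{L_{\beta}^{2}[a,b]}=1$, I need $\int_{a}^{b}1\,{\rm d}_{\beta}=b-a$; this follows from a telescoping computation in~(\ref{beta-integral-s0-x}) together with Proposition~A (so $\int_{s_{0}}^{x}1\,{\rm d}_{\beta}=\sum_{k\ge 0}(\beta^{k}(x)-\beta^{k+1}(x))=x-s_{0}$) and then~(\ref{beta-integral-a-b}); alternatively, it is an immediate consequence of Theorem~B applied to $F(t)=t$.

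Next I would set $\phi=m\sqrt{b-a}$, $\Phi=M\sqrt{b-a}$, $\gamma=n\sqrt{b-a}$, $\Gamma=N\sqrt{b-a}$, so that $\phi e\equiv m$, $\Phi e\equiv M$, $\gamma e\equiv n$, $\Gamma e\equiv N$. The pointwise bounds~(\ref{inequalities-hypothesis}) give $(M-f)(f-m)\ge 0$ and $(N-g)(g-n)\ge 0$ on $[a,b]_{\beta}$. The hypothesis $a<s_{0}<b$ is exactly what is needed to invoke the monotony Proposition~D, which yields
\begin{equation*}
\langle\Phi e-f,f-\phi e\rangle_{\beta}=\int_{a}^{b}(M-f)(f-m)\,{\rm d}_{\beta}\ge 0,
\end{equation*}
and analogously for the pair $(g,e)$. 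This verifies Dragomir's hypothesis.

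Finally, I would unwind the inner products: $\langle f,g\rangle_{\beta}=\int_{a}^{b}fg\,{\rm d}_{\beta}$, $\langle f,e\rangle_{\beta}=(b-a)^{-1/2}\int_{a}^{b}f\,{\rm d}_{\beta}$, $\langle e,g\rangle_{\beta}=(b-a)^{-1/2}\int_{a}^{b}g\,{\rm d}_{\beta}$, so that Dragomir's conclusion becomes
\begin{equation*}
\Bigl|\int_{a}^{b}\!fg\,{\rm d}_{\beta}-\tfrac{1}{b-a}\int_{a}^{b}\!f\,{\rm d}_{\beta}\!\int_{a}^{b}\!g\,{\rm d}_{\beta}\Bigr|\le\tfrac{1}{4}(M-m)(N-n)(b-a),
\end{equation*}
and dividing through by $b-a$ produces~(\ref{Gruss-inequality}). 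The sharpness of $\tfrac14$ transfers from the abstract Dragomir bound; if a concrete witness is desired one can choose $f=g$ taking the extreme values on two disjoint pieces of $[a,b]_{\beta}$ that each carry $\beta$-measure $(b-a)/2$. The only subtle step is the positivity verification, where the placement $a<s_{0}<b$ is essential because Proposition~D fails without it; every other ingredient is a direct translation between the abstract Hilbert-space framework and the concrete $\beta$-integral formulation.
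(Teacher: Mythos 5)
Your proposal is correct and follows essentially the same route as the paper's own (first) proof: both deduce the result as a corollary of Dragomir's inner-product Gr\"uss inequality applied in $L_{\beta}^{2}[a,b]$, with the positivity hypotheses $\int_a^b(M-f)(f-m)\,{\rm d}_{\beta}\ge 0$ and $\int_a^b(N-g)(g-n)\,{\rm d}_{\beta}\ge 0$ verified via Proposition~D; you merely spell out the normalization of the constant unit vector and the unwinding of the inner products, which the paper leaves implicit. (The paper also supplies an alternative self-contained proof via the $\beta$-Korkine identity, but that is presented as a second, independent argument.)
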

\begin{proof}
In order to follow the proof from \cite{D:1999}, notice that the hypothesis
inequalities (\ref{inequalities-hypothesis}) are equivalent to
$\,\big(M-f(x)\big)\big(f(x)-m\big)\geq 0\,$ and
$\,\big(N-g(x)\big)\big(g(x)-n\big)\geq 0\,$, which, by Propositon D, imply
\begin{equation*}
\int_{a}^{b}\big(M-f(x)\big)\big(f(x)-m\big)\,d_{\beta}x\geq 0\,,\quad
\int_{a}^{b}\big(N-g(x)\big)\big(g(x)-n\big)\,d_{\beta}x\geq 0\,.
\end{equation*}
\end{proof}
Further generalizations (see \cite[Proposition 1, p. 78]{D:1999}) could also be
considered in the set of all \emph{isotonic} (\cite[p. 77]{D:1999}) linear functionals
in $\,L_{\beta}^2[a,b]\,$, i.e., linear functionals which satisfy the
\emph{monotony} property (Proposition D) whenever
$\,a\leq s_0\leq b\,$. Accordingly,

\centerline{$\displaystyle f\longmapsto\frac{1}{b-a}\int_a^b f\,{\rm d}_{\beta}$}

\vspace{0.5em}
\noindent gives us a \emph{normalized} linear functional \cite[p. 77]{D:1999} in the space
$\,L_{\beta}^2[a,b]\,$, which is \emph{isotonic} whenever $\,a\leq s_0\leq b\,$,
where $\,s_0\,$ is the only fixed point of the function $\,\beta\,$.

Anyhow, we'll present a direct proof of Theorem
\ref{Gruss-inequality-theorem}. To achieve this goal, we'll explore the ideas
of \cite[Chapter X, p. 295]{MPF:1993}, which, in our opinion, will enrich this work
since, some of the needed results, despite being simple, are not yet published in
the literature. First of all, let us introduce the following definition.
\begin{defn}\label{Chebyshev-functional}
Being $\,f,\,g:\,[a,b]\to\mathbb{R}\,$ two absolutely $\,\beta$-integrable
functions on $\,[a,b]\,$, the Chebyshev functional
$\,T_{\beta}\,$ is defined by

$\displaystyle T_{\beta}(f,g):=\frac{1}{b-a}\int_{a}^{b}f(x)\,g(x)\,{\rm d}_{\beta}x\,-
\,\left(\frac{1}{b-a}\int_{a}^{b}f(x)\,{\rm d}_{\beta}x\right)
\left(\frac{1}{b-a}\int_{a}^{b}g(x)\,{\rm d}_{\beta}x\right)\,.$
\end{defn}
We remark that, the classical functional $\,T(f,g)\,$, defined by

$\displaystyle T(f,g):=\frac{1}{b-a}\int_{a}^{b}f(x)\,g(x)\,{\rm d}x\,-
\,\left(\frac{1}{b-a}\int_{a}^{b}f(x)\,{\rm d}x\right)
\left(\frac{1}{b-a}\int_{a}^{b}g(x)\,{\rm d}x\right)\,.$
in the context of the Riemann or the Lebesgue integration, is sometimes
identified as the \u{C}eby\u{s}ev functional.

\begin{prop}\label{positivity-proposition}
Assume that $\,f:\,[a,b]\to\mathbb{R}\,$ is an absolutely $\,\beta$-integrable
function on $\,[a,b]\,$. If $\,a\leq s_0\leq b\,$ then
\begin{equation}\label{positivity}
T_{\beta}(f,f)\geq 0\,.
\end{equation}
\end{prop}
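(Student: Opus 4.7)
The plan is to view $T_{\beta}(f,f)$ as the deficit in a Cauchy--Schwarz-type inequality and dispatch it by a one-line argument. The cleanest route invokes the Hilbert space structure of $L_{\beta}^{2}[a,b]$ (Theorem G\,(ii)): Cauchy--Schwarz for the pair $(f,1)$ gives $|\langle f,1\rangle_{\beta}|^{2}\leq \|f\|_{L_{\beta}^{2}}^{2}\,\|1\|_{L_{\beta}^{2}}^{2}$, i.e.\ $\bigl(\int_{a}^{b}f\,{\rm d}_{\beta}\bigr)^{2}\leq (b-a)\int_{a}^{b}f^{2}\,{\rm d}_{\beta}$, where I use $\int_{a}^{b}1\,{\rm d}_{\beta}=b-a$ (immediate from Theorem B, since $D_{\beta}[t]=1$). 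Dividing by $(b-a)^{2}$ is exactly $T_{\beta}(f,f)\geq 0$.

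A self-contained alternative that I would actually prefer in the write-up, because it avoids appealing to any external Hilbert-space machinery and makes transparent the role of the hypothesis $a\leq s_{0}\leq b$, relies only on Proposition D. For each $\lambda\in\mathbb{R}$ the function $(f-\lambda)^{2}$ is pointwise non-negative on $[a,b]_{\beta}$, so Proposition D together with linearity of the $\beta$-integral yields
\[
0\leq\int_{a}^{b}\bigl(f(x)-\lambda\bigr)^{2}\,{\rm d}_{\beta}x=\int_{a}^{b}f^{2}\,{\rm d}_{\beta}-2\lambda\int_{a}^{b}f\,{\rm d}_{\beta}+(b-a)\lambda^{2}.
\]
Selecting $\lambda=\frac{1}{b-a}\int_{a}^{b}f\,{\rm d}_{\beta}$ collapses the right-hand side to $(b-a)\,T_{\beta}(f,f)$, from which the conclusion follows upon dividing by $b-a$.

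I do not anticipate any substantive obstacle: both routes reduce the claim to a single invocation of a tool already established earlier in the paper. The only item requiring a word of justification is $\int_{a}^{b}1\,{\rm d}_{\beta}=b-a$, which in the elementary approach is also the unique place where the hypothesis $a\leq s_{0}\leq b$ enters (through Proposition D). In particular, no appeal to the Korkine-type identity (Lemma \ref{beta-Korkine}) is needed, which is convenient since Proposition \ref{positivity-proposition} is placed before that lemma in the paper.
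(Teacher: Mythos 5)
Your proposal is correct, and your preferred second route (apply Proposition D to the pointwise nonnegative function $(f-\lambda)^2$, expand by linearity, and specialize $\lambda=\frac{1}{b-a}\int_a^b f\,{\rm d}_{\beta}$) is exactly the argument the paper gives. The Cauchy--Schwarz alternative via Theorem G\,(ii) is also valid but is not needed; no further comment is required.
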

\begin{proof}
In fact, for any real constant $\,c\,$, since $\,a\leq s_0\leq b\,$, by Proposition D,
$$0\leq\int_a^b\big(f(x)-c\big)^2\,{\rm d}_{\beta}x=\int_a^b f^2(x)\,{\rm d}_{\beta}x
\,-\,2c\int_a^b f(x)\,{\rm d}_{\beta}x+c^2(b-a)\,,$$
thus, for the particular case where $\,c=\frac{1}{b-a}\int_ a^b f(t)\,{\rm d}_{\beta}t\,$,
\begin{equation*}
0\leq\int_a^b\left(f(x)-\frac{\int_ a^b f(t)\,d_{\beta}t}{b-a}\right)^2\,{\rm d}_{\beta}x=
\int_a^b f^2(x)\,{\rm d}_{\beta}x-
\frac{1}{b-a}\left(\int_ a^b f(x)\,{\rm d}_{\beta}x\right)^2 \,,
\end{equation*}
therefore

\centerline{
$\displaystyle T_{\beta}(f,f)=\frac{1}{b-a}\int_a^b f^2(x)\,{\rm d}_{\beta}x-
\frac{1}{(b-a)^2}\left(\int_ a^b f(x)\,{\rm d}_{\beta}x\right)^2\geq 0\,.$
}
\end{proof}

Then, one may obtain the following identity.
\begin{lem}[$\beta$-Korkine identity]\label{beta-Korkine}
Let $\,f,\,g:\,[a,b]\to\mathbb{R}\,$ be two absolutely $\,\beta$-integrable functions
on $\,[a,b]\,$. Then
\begin{equation*}
T_{\beta}(f,g)=\frac{1}{2(b-a)^2}
\int_{a}^{b}\int_{a}^{b}\big(f(x)-f(y)\big)\big(g(x)-g(y)\big)\,
{\rm d}_{\beta}x\,{\rm d}_{\beta}y\,.
\end{equation*}
\end{lem}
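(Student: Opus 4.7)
The plan is to mimic the classical Korkine identity proof: expand the integrand of the double $\beta$-integral, split into four pieces by linearity, identify each piece with either $(b-a)\int_a^b fg\,\mathrm{d}_\beta$ or with a product $\int_a^b f\,\mathrm{d}_\beta \cdot \int_a^b g\,\mathrm{d}_\beta$, then collect and divide by $2(b-a)^2$.

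First I would record the elementary fact that $\int_a^b c\,\mathrm{d}_\beta x = c(b-a)$ for any constant $c$; this follows directly from the definition~(\ref{beta-integral-s0-x}): the partial sums telescope and, by Proposition~A, $\beta^{k}(x)\to s_0$, so $\int_{s_0}^x c\,\mathrm{d}_\beta = c(x-s_0)$, whence~(\ref{beta-integral-a-b}) gives the claim. Next, I would expand
\[
\big(f(x)-f(y)\big)\big(g(x)-g(y)\big)
= f(x)g(x) + f(y)g(y) - f(x)g(y) - f(y)g(x).
\]
Integrating in $x$ first with $y$ fixed and using linearity of the $\beta$-integral, the term $f(y)g(y)$ pulls out of the inner $\mathrm{d}_\beta x$-integral as a constant producing $(b-a)f(y)g(y)$; similarly the factor $g(y)$ (resp.\ $f(y)$) pulls out of the mixed terms, leaving $\big(\int_a^b f\,\mathrm{d}_\beta\big)g(y)$ and $\big(\int_a^b g\,\mathrm{d}_\beta\big)f(y)$.

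Then I would perform the $y$-integration in the same way. The two ``diagonal'' pieces each contribute $(b-a)\int_a^b fg\,\mathrm{d}_\beta$, while the two ``cross'' pieces each contribute $\int_a^b f\,\mathrm{d}_\beta\cdot\int_a^b g\,\mathrm{d}_\beta$. Summing:
\[
\int_a^b\!\!\int_a^b \big(f(x)-f(y)\big)\big(g(x)-g(y)\big)\,\mathrm{d}_\beta x\,\mathrm{d}_\beta y
= 2(b-a)\int_a^b fg\,\mathrm{d}_\beta - 2\int_a^b f\,\mathrm{d}_\beta\int_a^b g\,\mathrm{d}_\beta,
\]
and dividing by $2(b-a)^2$ yields exactly $T_\beta(f,g)$ as defined in Definition~\ref{Chebyshev-functional}.

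The only genuine subtlety, and the step I expect to be the main obstacle, is justifying the iterated/Fubini-type manipulation: since the $\beta$-integral is defined via the double series~(\ref{beta-integral-a-b-explicitly}), the inner and outer integrations are really iterated infinite sums over $k,\ell\in\mathbb{N}_0$, and interchanging the order of summation requires absolute convergence. This is exactly what the hypothesis that $f$ and $g$ are \emph{absolutely} $\beta$-integrable delivers, together with the boundedness of the factor that is being pulled out at each step; hence Fubini for series applies and each of the four terms can be separated as claimed. Once this is noted, the computation itself is routine bookkeeping.
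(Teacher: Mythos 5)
Your proposal is correct and follows essentially the same route as the paper's own proof: expand the product $\big(f(x)-f(y)\big)\big(g(x)-g(y)\big)$ into four terms, separate them by linearity of the iterated $\beta$-integral, and collect to get $2(b-a)\int_a^b fg\,{\rm d}_{\beta}-2\int_a^b f\,{\rm d}_{\beta}\int_a^b g\,{\rm d}_{\beta}$. Your extra remarks on $\int_a^b c\,{\rm d}_{\beta}x=c(b-a)$ and on the absolute-convergence justification for splitting the double series are details the paper leaves implicit, but they do not change the argument.
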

\begin{proof}
Using the linearity property of the $\,\beta$-integral, we obtain
\begin{equation*}
\begin{array}{l}\displaystyle
\int_{a}^{b}\int_{a}^{b}\big(f(x)-f(y)\big)\big(g(x)-g(y)\big)\,
{\rm d}_{\beta}x\,{\rm d}_{\beta}y \\ [1.2em]
\hspace{2em} = \displaystyle
\int_{a}^{b}\int_{a}^{b}\Big(f(x)\,g(x)-f(x)\,g(y)-f(y)\,g(x)+f(y)\,g(y)\Big)\,
{\rm d}_{\beta}x\,{\rm d}_{\beta}y \\ [1em]
\hspace{2em} = \displaystyle
2(b-a)\int_{a}^{b}f(x)\,g(x)\,{\rm d}_{\beta}x-2\left(\int_{a}^{b}f(x)\,{\rm d}_{\beta}x\right)
\left(\int_{a}^{b}g(x)\,{\rm d}_{\beta}x\right)\,.
\end{array}
\end{equation*}
\end{proof}
The following proposition is a simple and direct consequence of Theorem E, so we
omit its proof.
\begin{prop}\label{double-Holder-inequality}
Assume that $\,p\,$ and $\,p^{\prime}\,$ are such that $1<p<\infty$, $1<p^{\prime}<\infty$
and $\,\frac{1}{p}+\frac{1}{p^{\prime}}=1\,$. If $\,a\leq s_0\leq b\,$ then
\begin{equation*}
\begin{array}{l}
\displaystyle
\int_{a}^b\int_{a}^b\left|f(x,y)\,g(x,y)\right|\,{\rm d}_{\beta}x\,{\rm d}_{\beta}y\leq \\
\hspace{3em}\displaystyle
\left(\int_{a}^b\int_{a}^b\left|f(x,y)\right|^p{\rm d}_{\beta}x\,
{\rm d}_{\beta}y\right)^{\frac{1}{p}}
\left(\int_{a}^b\int_{a}^b\left|g(x,y)\right|^{p^{\prime}}{\rm d}_{\beta}x\,
{\rm d}_{\beta}y\right)^{\frac{1}{p^{\prime}}}
\end{array}
\end{equation*}
holds whenever
$\,f(x,\cdot)\in\mathscr{L}_{\beta}^p[a,b]\,$,
$\,g(x,\cdot)\in\mathscr{L}_{\beta}^{p^{\prime}}[a,b]\,$,
$\,\int_{a}^{b}\left|f(x,y)\right|^p{\rm d}_{\beta}x\in\mathscr{L}_{\beta}[a,b]\,$,
and
$\,\int_{a}^b\left|g(x,y)\right|^{p^{\prime}}{\rm d}_{\beta}x\in\mathscr{L}_{\beta}[a,b]\,$.
\end{prop}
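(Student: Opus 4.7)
The plan is to deduce the double-integral Hölder inequality from the single-integral version in Theorem E by a standard iterated-Hölder argument, applying it first in the variable $x$ and then in the variable $y$. Since $a\leq s_0\leq b$ is part of the hypothesis, Theorem E is available for both integration variables, and the \emph{monotony} of the $\beta$-integral (Proposition D) lets us integrate inequalities that hold pointwise in $y$.

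First, for each fixed $y\in[a,b]_{\beta}$, the hypothesis $f(\cdot,y)\in\mathscr{L}_{\beta}^p[a,b]$ and $g(\cdot,y)\in\mathscr{L}_{\beta}^{p^{\prime}}[a,b]$ lets me apply Theorem E in the variable $x$ to obtain
\begin{equation*}
\int_{a}^{b}\bigl|f(x,y)\,g(x,y)\bigr|\,{\rm d}_{\beta}x\leq
\left(\int_{a}^{b}\bigl|f(x,y)\bigr|^p\,{\rm d}_{\beta}x\right)^{\!\frac{1}{p}}
\left(\int_{a}^{b}\bigl|g(x,y)\bigr|^{p^{\prime}}\,{\rm d}_{\beta}x\right)^{\!\frac{1}{p^{\prime}}}.
\end{equation*}
Introducing the auxiliary functions $F(y):=\bigl(\int_{a}^{b}|f(x,y)|^p\,{\rm d}_{\beta}x\bigr)^{1/p}$ and $G(y):=\bigl(\int_{a}^{b}|g(x,y)|^{p^{\prime}}\,{\rm d}_{\beta}x\bigr)^{1/p^{\prime}}$, this reads $\int_{a}^{b}|f(x,y)\,g(x,y)|\,{\rm d}_{\beta}x\leq F(y)\,G(y)$. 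By Proposition D, integrating in $y$ preserves the inequality, so
\begin{equation*}
\int_{a}^{b}\!\int_{a}^{b}|f(x,y)\,g(x,y)|\,{\rm d}_{\beta}x\,{\rm d}_{\beta}y\leq
\int_{a}^{b} F(y)\,G(y)\,{\rm d}_{\beta}y.
\end{equation*}
Now the hypotheses $\int_{a}^{b}|f(x,y)|^p\,{\rm d}_{\beta}x\in\mathscr{L}_{\beta}[a,b]$ and $\int_{a}^{b}|g(x,y)|^{p^{\prime}}\,{\rm d}_{\beta}x\in\mathscr{L}_{\beta}[a,b]$ say exactly that $F^{p}$ and $G^{p^{\prime}}$ lie in $\mathscr{L}_{\beta}^1[a,b]$, so $F\in\mathscr{L}_{\beta}^p[a,b]$ and $G\in\mathscr{L}_{\beta}^{p^{\prime}}[a,b]$. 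A second application of Theorem E, this time in $y$, yields
\begin{equation*}
\int_{a}^{b} F(y)\,G(y)\,{\rm d}_{\beta}y\leq
\left(\int_{a}^{b} F(y)^{p}\,{\rm d}_{\beta}y\right)^{\!\frac{1}{p}}
\left(\int_{a}^{b} G(y)^{p^{\prime}}\,{\rm d}_{\beta}y\right)^{\!\frac{1}{p^{\prime}}},
\end{equation*}
and unwinding the definitions of $F$ and $G$ gives the claimed inequality.

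No step is really difficult; the only delicate point is bookkeeping to verify that every iterated $\beta$-integral appearing in the chain converges. This is precisely why the proposition is stated with the two measurability-type hypotheses on $\int_{a}^{b}|f|^p\,{\rm d}_{\beta}x$ and $\int_{a}^{b}|g|^{p^{\prime}}\,{\rm d}_{\beta}x$: they are exactly what is needed so that $F\in\mathscr{L}_{\beta}^p[a,b]$, $G\in\mathscr{L}_{\beta}^{p^{\prime}}[a,b]$, and Theorem E can be invoked in the outer variable without a circularity.
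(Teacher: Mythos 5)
Your argument is correct and is precisely the iterated-H\"older argument the paper has in mind: the authors omit the proof, stating only that the proposition ``is a simple and direct consequence of Theorem E,'' and your two applications of Theorem E (in $x$ for fixed $y$, then in $y$ after invoking Proposition D) together with the observation that the two integrability hypotheses are exactly what put $F$ and $G$ in $\mathscr{L}_{\beta}^p[a,b]$ and $\mathscr{L}_{\beta}^{p^{\prime}}[a,b]$ fill in that omission faithfully. The only cosmetic point is that you read the hypothesis $f(x,\cdot)\in\mathscr{L}_{\beta}^p[a,b]$ as $f(\cdot,y)\in\mathscr{L}_{\beta}^p[a,b]$ to match the inner integration in $x$, which is the sensible interpretation of the paper's loose notation.
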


\begin{prop}\label{kind-of-Cauchy-Schwarz-proposition}
Assume that $\,f,\,g:\,[a,b]\to\mathbb{R}\,$ are two absolutely $\,\beta$-integrable
functions on $\,[a,b]\,$. If $\,a\leq s_0\leq b\,$ then
\begin{equation}\label{kind-of-Cauchy-Schwarz-inequality}
T_{\beta}^2(f,g)\leq T_{\beta}(f,f)\,T_{\beta}(g,g)\,.
\end{equation}
\end{prop}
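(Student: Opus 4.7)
The approach will be the standard one for Cauchy--Schwarz-type bounds on Chebyshev functionals: combine the just-proved $\beta$-Korkine identity (Lemma \ref{beta-Korkine}) with a Cauchy--Schwarz inequality applied to a double $\beta$-integral. Concretely, I will invoke Proposition \ref{double-Holder-inequality} in the symmetric case $p=p'=2$, which is the $\beta$-calculus avatar of Cauchy--Schwarz on the product space.

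First, I would write $T_{\beta}(f,g)$ using the $\beta$-Korkine identity,
$$T_{\beta}(f,g)=\frac{1}{2(b-a)^2}\int_{a}^{b}\!\!\int_{a}^{b}\bigl(f(x)-f(y)\bigr)\bigl(g(x)-g(y)\bigr)\,{\rm d}_{\beta}x\,{\rm d}_{\beta}y,$$
and take absolute values, using the triangle-type inequality for the $\beta$-integral (a direct consequence of the monotony property, Proposition D, which applies since $a\leq s_0\leq b$) to pull the modulus inside. Next, I would apply Proposition \ref{double-Holder-inequality} with $p=p'=2$ to $F(x,y):=|f(x)-f(y)|$ and $G(x,y):=|g(x)-g(y)|$, obtaining
$$|T_{\beta}(f,g)|\leq\frac{1}{2(b-a)^2}\left(\int_{a}^{b}\!\!\int_{a}^{b}\bigl(f(x)-f(y)\bigr)^2{\rm d}_{\beta}x\,{\rm d}_{\beta}y\right)^{\!1/2}\!\!\left(\int_{a}^{b}\!\!\int_{a}^{b}\bigl(g(x)-g(y)\bigr)^2{\rm d}_{\beta}x\,{\rm d}_{\beta}y\right)^{\!1/2}.$$

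Third, I would re-apply Lemma \ref{beta-Korkine}, first with $g$ replaced by $f$ and then with $f$ replaced by $g$, to recognize the two factors as $2(b-a)^2\,T_{\beta}(f,f)$ and $2(b-a)^2\,T_{\beta}(g,g)$ respectively. Substituting these expressions back cancels the prefactor $\frac{1}{2(b-a)^2}$ and yields
$$|T_{\beta}(f,g)|\leq\sqrt{T_{\beta}(f,f)\,T_{\beta}(g,g)}.$$
Squaring both sides then delivers the claim; no sign issue arises because Proposition \ref{positivity-proposition} guarantees that $T_{\beta}(f,f),\,T_{\beta}(g,g)\geq 0$.

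The main obstacle is a soft one: checking the integrability hypotheses needed by Proposition \ref{double-Holder-inequality}. Since $f$ and $g$ are absolutely $\beta$-integrable on $[a,b]$, the squared differences expand into linear combinations of products of $1$, $f(x)$, $f(y)$, $f(x)f(y)$, $f^2(x)$, $f^2(y)$ (and analogously for $g$); the iterated $\beta$-sums defining the double integrals converge absolutely and factor through the one-dimensional $\beta$-integrals of $f$, $g$, $f^2$, $g^2$, whose finiteness is built into the hypothesis. Once this Fubini-type bookkeeping is dispatched, the proof is entirely mechanical.
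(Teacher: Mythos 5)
Your proposal is correct and follows essentially the same route as the paper: the $\beta$-Korkine identity, the monotony property (Proposition D) to pull the absolute value inside, Proposition \ref{double-Holder-inequality} with $p=p^{\prime}=2$, and the identification of the two resulting double integrals with $2(b-a)^2\,T_{\beta}(f,f)$ and $2(b-a)^2\,T_{\beta}(g,g)$. The only cosmetic difference is that you obtain those identifications by re-invoking Lemma \ref{beta-Korkine} with $g=f$, whereas the paper expands the squared differences directly; the computation is the same.
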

\begin{proof}
By the $\,\beta$-Korkine Lemma \ref{beta-Korkine} and Proposition D, the Chebyshev
functional of Definition \ref{Chebyshev-functional} satisfies

\vspace{0.5em}
\centerline{
$\displaystyle\big|T_{\beta}(f,g)\big|\leq\frac{1}{2(b-a)^2}
\int_{a}^{b}\int_{a}^{b}\Big|\big(f(x)-f(y)\big)\big(g(x)-g(y)\big)\Big|\,
{\rm d}_{\beta}x\,{\rm d}_{\beta}y\,.$}

\vspace{0.5em}
\noindent By Proposition \ref{double-Holder-inequality}, with $\,p=2=p^{\prime}\,$,
\begin{equation}\label{kind-of-C-S-identity1}
\begin{array}{l}
\displaystyle
\displaystyle\big|T_{\beta}(f,g)\big|\leq \\
\hspace{0,5em}\displaystyle
\frac{1}{2(b-a)^2}\left(\int_{a}^{b}\!\int_{a}^{b}\!\big(f(x)-f(y)\big)^2\,{\rm d}_{\beta}x\,{\rm d}_{\beta}y\right)
^{\frac{1}{2}}\left(\int_{a}^{b}\!\int_{a}^{b}\!\big(g(x)-g(y)\big)^2\,
{\rm d}_{\beta}x\,{\rm d}_{\beta}y\right)^{\frac{1}{2}}.
\end{array}
\end{equation}
Notice that,
\begin{equation}\label{kind-of-C-S-identity2}
\begin{array}{l}
\displaystyle
\int_{a}^{b}\!\int_{a}^{b}\!\big(f(x)-f(y)\big)^2\,{\rm d}_{\beta}x\,{\rm d}_{\beta}y=
\int_{a}^{b}\!\int_{a}^{b}\!\Big(f^2(x)-2\,f(x)\,f(y)+f^2(y)\Big)\,
{\rm d}_{\beta}x\,{\rm d}_{\beta}y= \\ [1em]
\hspace{1.5em}\displaystyle=(b-a)\int_{a}^{b}f^2(x)\,{\rm d}_{\beta}x-
2\,\int_{a}^{b}f(x)\,{\rm d}_{\beta}x\,\int_{a}^{b}f(y)\,{\rm d}_{\beta}y+
(b-a)\int_{a}^{b}f^2(y){\rm d}_{\beta}y  \\ [1em]
\hspace{1.5em}\displaystyle=2(b-a)^2\left(\frac{1}{b-a}\int_{a}^{b}f^2(x)\,{\rm d}_{\beta}x-
\frac{1}{b-a}\int_{a}^{b}f(x)\,{\rm d}_{\beta}x\,
\frac{1}{b-a}\int_{a}^{b}f(x)\,{\rm d}_{\beta}x \right) \\ [1.5em]
\hspace{1.5em}\displaystyle=2(b-a)^2\,T(f,f)\,,
\end{array}
\end{equation}
and, as well,
\begin{equation}\label{kind-of-C-S-identity3}
\int_{a}^{b}\!\int_{a}^{b}\!\big(g(x)-g(y)\big)^2\,{\rm d}_{\beta}x\,{\rm d}_{\beta}y=
2(b-a)^2\,T(g,g)\,.
\end{equation}
Joining together (\ref{kind-of-C-S-identity1}), (\ref{kind-of-C-S-identity2}) and
(\ref{kind-of-C-S-identity3}), we immediately obtain
(\ref{kind-of-Cauchy-Schwarz-inequality}).
\end{proof}
We are, now, in conditions to prove directly the $\,\beta$-Gr\"uss inequality
(\ref{Gruss-inequality}) of Theorem \ref{Gruss-inequality-theorem}.
\begin{proof}[Direct proof of Theorem \ref{Gruss-inequality-theorem}]
Using properties of the $\,\beta$-integral and simplifying, we get
\begin{equation*}
\begin{array}{l}
\displaystyle\left(M-\frac{\int_{a}^{b}f(x){\rm d}_{\beta}x}{b-a}\right)
\left(\frac{\int_{a}^{b}f(x){\rm d}_{\beta}x}{b-a}-m\right)-
\frac{1}{b-a}\int_{a}^{b}\big(M-f(x)\big)\big(f(x)-m\big){\rm d}_{\beta}x \\ [1.2em]
\hspace{3em}\displaystyle=\frac{1}{b-a}\int_{a}^{b}f^2(x){\rm d}_{\beta}x-
\frac{1}{(b-a)^2}\left(\int_{a}^{b}f(x){\rm d}_{\beta}x\right)^2=T_{\beta}(f,f) 
\end{array}
\end{equation*}
Under the hypothesis $\,m\leq f(x)\leq M\,$ and because $\,a\leq s_0\leq b\,$,
by Proposition D, we have

\vspace{0.3em}
\centerline{$\,\displaystyle
m\leq\frac{1}{b-a}\int_{a}^{b}f(x){\rm d}_{\beta}x\leq M\,$,}

\noindent therefore
\begin{equation}\label{extra-inequality-1}
T_{\beta}(f,f)\leq
\left(M-\frac{1}{b-a}\int_{a}^{b}f(x){\rm d}_{\beta}x\right)
\left(\frac{1}{b-a}\int_{a}^{b}f(x){\rm d}_{\beta}x-m\right)\,,
\end{equation}
and, in a similar way, because $\,n\leq g(x)\leq N\,$,
\begin{equation}\label{extra-inequality-2}
T_{\beta}(g,g)\leq
\left(N-\frac{1}{b-a}\int_{a}^{b}g(x){\rm d}_{\beta}x\right)
\left(\frac{1}{b-a}\int_{a}^{b}g(x){\rm d}_{\beta}x-n\right)\,.
\end{equation}
But, for any real $\,x\,$ and $\,y\,$, we know that
$\,\displaystyle 4\,x\,y\,\leq\,(x+y)^2\,$, so, from (\ref{extra-inequality-1}) and
(\ref{extra-inequality-2}), we obtain
$\,T_{\beta}(f,f)\leq\frac{(M-m)^2}{4}\,$ and
$\,T_{\beta}(g,g)\leq\frac{(N-n)^2}{4}\,$, respectively.
To finish the direct proof of the Theorem \ref{Gruss-inequality-theorem}, it
is enough to introduce this last inequalities into inequality
(\ref{kind-of-Cauchy-Schwarz-inequality}) of Proposition \ref{kind-of-Cauchy-Schwarz-proposition}.
\end{proof}

\begin{cor}\label{lemma-functional}
Let $\,f,\,g\,:\,[a,b]\to\mathbb{R}\,$ be absolutely $\,\beta$-integrable
functions on $\,[a,b]\,$, and suppose that there exist real
numbers $\,m\,$ and $\,M\,$ such that
\begin{equation*}
m\leq f(x)\leq M\,,\quad \mbox{for all}\;\:x\in [a,b]_{\beta}\,.
\end{equation*}
If $\,a<s_0<b\,$, then
\begin{equation}\label{lemma-functional-inequality}
\left|T_{\beta}(f,g)\right|\leq\frac{1}{2}(M-m)\sqrt{T_{\beta}(g,g)}\,.
\end{equation}
\end{cor}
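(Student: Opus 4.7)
The plan is to derive Corollary~\ref{lemma-functional} essentially as a one-line consequence of two facts already established in the proof of Theorem~\ref{Gruss-inequality-theorem}: the Cauchy--Schwarz-type inequality \eqref{kind-of-Cauchy-Schwarz-inequality} of Proposition~\ref{kind-of-Cauchy-Schwarz-proposition}, namely
$$T_{\beta}^2(f,g)\leq T_{\beta}(f,f)\,T_{\beta}(g,g),$$
together with the upper bound $T_{\beta}(f,f)\leq \frac{1}{4}(M-m)^2$, which was obtained during the direct proof of Theorem~\ref{Gruss-inequality-theorem}. Once both are in hand, the conclusion \eqref{lemma-functional-inequality} follows by taking square roots.

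First I would verify that under the hypotheses of the corollary both ingredients apply. The functions $f$ and $g$ are absolutely $\beta$-integrable on $[a,b]$, and $a<s_0<b$ implies in particular $a\leq s_0\leq b$, which is the standing assumption required by Proposition~\ref{positivity-proposition}, Lemma~\ref{beta-Korkine}, Proposition~\ref{double-Holder-inequality} and Proposition~\ref{kind-of-Cauchy-Schwarz-proposition}. Hence \eqref{kind-of-Cauchy-Schwarz-inequality} is available for the pair $(f,g)$.

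Next, I would reproduce the short computation from the direct proof of Theorem~\ref{Gruss-inequality-theorem} that bounds $T_{\beta}(f,f)$. Expanding the product $(M-f(x))(f(x)-m)$ and integrating via the linearity of the $\beta$-integral yields the identity
$$T_{\beta}(f,f)=\left(M-\frac{1}{b-a}\int_{a}^{b}f\,{\rm d}_{\beta}x\right)\left(\frac{1}{b-a}\int_{a}^{b}f\,{\rm d}_{\beta}x-m\right)-\frac{1}{b-a}\int_{a}^{b}\bigl(M-f(x)\bigr)\bigl(f(x)-m\bigr)\,{\rm d}_{\beta}x.$$
Since $m\leq f\leq M$ on $[a,b]_{\beta}$, the integrand in the last term is nonnegative, and Proposition~D (the monotony property, applicable because $a\leq s_0\leq b$) lets us drop that term while also guaranteeing $m\leq \frac{1}{b-a}\int_a^b f\,{\rm d}_\beta x\leq M$. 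The elementary inequality $4xy\leq (x+y)^2$ applied with $x=M-\bar f$ and $y=\bar f-m$ (where $\bar f$ denotes the $\beta$-integral mean of $f$) then yields $T_{\beta}(f,f)\leq \frac{1}{4}(M-m)^2$.

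Substituting this bound into \eqref{kind-of-Cauchy-Schwarz-inequality} gives $T_{\beta}^2(f,g)\leq \frac{1}{4}(M-m)^2\,T_{\beta}(g,g)$, and taking square roots produces exactly \eqref{lemma-functional-inequality}. There is no genuine obstacle here: the only subtlety is noting that no bounds on $g$ are assumed, so one must not try to estimate $T_{\beta}(g,g)$ further, and one must be careful to invoke monotony (Proposition~D) only under $a\leq s_0\leq b$, which is weaker than the strict inequality $a<s_0<b$ in the hypothesis and therefore holds automatically.
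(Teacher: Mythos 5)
Your proposal is correct and follows essentially the same route as the paper: both proofs hinge on combining the bound $T_{\beta}(f,f)\leq\frac{1}{4}(M-m)^2$ with the Cauchy--Schwarz-type inequality \eqref{kind-of-Cauchy-Schwarz-inequality} and taking square roots. The only cosmetic difference is that the paper obtains the intermediate bound by specializing Theorem \ref{Gruss-inequality-theorem} to the pair $(f,f)$, whereas you re-derive it directly from the identity for $T_{\beta}(f,f)$ and Proposition D.
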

\begin{proof}
First, notice that, under the hypothesis of Theorem \ref{Gruss-inequality-theorem}
and the use of the Chebyshev functional of Definition \ref{Chebyshev-functional},
the corresponding inequality (\ref{Gruss-inequality}) becomes,
\begin{equation*}
\big|T_{\beta}(f,g)\big|\leq\frac{1}{4}(M-m(N-n)\,,
\end{equation*}
so, in particular,
\begin{equation}\label{lemma-required-inequality}
\left|T_{\beta}(f,f)\right|\leq\frac{1}{4}(M-m)^2\,.
\end{equation}
Thus, inequality (\ref{lemma-functional-inequality}) follows by combining
inequality (\ref{lemma-required-inequality}) with inequality
(\ref{kind-of-Cauchy-Schwarz-inequality}) of Proposition
\ref{kind-of-Cauchy-Schwarz-proposition}.
\end{proof}

Now, we present a $\,\beta$-analogue of the Gr\"uss-type inequalities
\cite[Theorem 3, p. 640]{M:2002}. It involves two consecutive inequalities.
\begin{thm}\label{Gruss-type-inequality-theorem}
Let $\,f,\,g\,:\,[a,b]\to\mathbb{R}\,$ be absolutely $\,\beta$-integrable
functions on $\,[a,b]\,$, and suppose that there exist real
numbers $\,m\,$ and $\,M\,$ such that
\begin{equation}\label{Gruss-type-hypothesis}
m\leq f(x)\leq M\,,\quad \mbox{for all}\;\:x\in [a,b]_{\beta}\,.
\end{equation}
If $\,a\leq s_0\leq b\,$, then
\begin{equation}\label{Gruss-type-inequality}
\begin{array}{lll}
\displaystyle\left|T_{\beta}(f,g)\right| & \leq & \displaystyle
\frac{1}{2}\big(M-m\big)\,\frac{1}{b-a}\int_{a}^{b}
\left|g(x)-\frac{1}{b-a}\int_a^bg(t)\,{\rm d}_{\beta}t\right|\,{\rm d}_{\beta}x \\ [1.4em]
 & \leq & \displaystyle\frac{1}{2}\big(M-m\big)\,\sqrt{T_{\beta}(g,g)}\,.
\end{array}
\end{equation}
\end{thm}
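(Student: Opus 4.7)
The plan is to reduce both inequalities to applications of tools already in place: the \emph{monotony} (Proposition D), the H\"older-type estimate (Theorem E), and the anticipated fact that $\big|\int_a^b h\,{\rm d}_{\beta}\big|\leq \int_a^b|h|\,{\rm d}_{\beta}$ (Proposition \ref{Abs-integral}). The guiding observation is that the Chebyshev functional is invariant under subtracting constants from either argument, which turns $T_{\beta}(f,g)$ into an integral of the form $\int(f-c)(g-\bar g)\,{\rm d}_{\beta}$, where the factor involving $f$ can then be estimated by the hypothesis (\ref{Gruss-type-hypothesis}).

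First I would set $\bar g:=\frac{1}{b-a}\int_a^b g(t)\,{\rm d}_{\beta}t$ and verify the identity
\begin{equation*}
T_{\beta}(f,g)=\frac{1}{b-a}\int_{a}^{b}\bigl(f(x)-c\bigr)\bigl(g(x)-\bar g\bigr)\,{\rm d}_{\beta}x
\end{equation*}
for every real constant $c$. This follows by expanding the right-hand side and using that $\int_a^b\bigl(g(x)-\bar g\bigr)\,{\rm d}_{\beta}x=0$ (which needs only the linearity of the $\beta$-integral together with the fact that $\int_a^b 1\,{\rm d}_{\beta}x=b-a$, obtained from the fundamental theorem applied to $f(x)=x$). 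Choosing $c=\frac{m+M}{2}$, the hypothesis gives $|f(x)-c|\leq\frac{M-m}{2}$ on $[a,b]_{\beta}$. Pulling the absolute value inside the $\beta$-integral via Proposition \ref{Abs-integral} and estimating then yields the first inequality of (\ref{Gruss-type-inequality}).

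For the second inequality I would apply the $\beta$-H\"older inequality (Theorem E, case $p=p'=2$) to the functions $|g(x)-\bar g|$ and the constant $1$, obtaining
\begin{equation*}
\int_{a}^{b}\bigl|g(x)-\bar g\bigr|\,{\rm d}_{\beta}x\leq\sqrt{b-a}\,\left(\int_{a}^{b}\bigl(g(x)-\bar g\bigr)^{2}\,{\rm d}_{\beta}x\right)^{1/2}.
\end{equation*}
A short expansion, analogous to the one used in the proof of the $\beta$-Korkine lemma, shows $\int_a^b(g-\bar g)^2\,{\rm d}_{\beta}x=(b-a)T_{\beta}(g,g)$. Dividing both sides by $b-a$ and inserting the result into the bound obtained in the previous step delivers the second inequality.

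The main obstacle is technical rather than conceptual: one needs the hypothesis $a\leq s_0\leq b$ at every monotonicity/H\"older step (since Proposition D and Theorem E both require $s_0$ to lie in $[a,b]$), and one must be confident that moving the absolute value inside the $\beta$-integral is legitimate for absolutely $\beta$-integrable functions, which is exactly what Proposition \ref{Abs-integral} is designed to provide. Once these auxiliary facts are invoked, the computation proceeds without difficulty and no sharpness argument is required since the theorem only asserts the two inequalities, not optimality of the constants.
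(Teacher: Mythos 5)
Your argument is correct and coincides essentially with the paper's own treatment: the first inequality is exactly the paper's ``simpler proof'' (writing $T_{\beta}(f,g)=\frac{1}{b-a}\int_a^b\bigl(f(x)-\frac{M+m}{2}\bigr)\bigl(g(x)-\bar g\bigr)\,{\rm d}_{\beta}x$ and bounding $\bigl|f-\frac{M+m}{2}\bigr|\leq\frac{M-m}{2}$), and the second is the same Cauchy--Bunyakovsky--Schwarz step via Theorem E with $p=p'=2$ together with $\int_a^b(g-\bar g)^2\,{\rm d}_{\beta}x=(b-a)T_{\beta}(g,g)$. The only cosmetic remark is that the triangle inequality $\bigl|\int_a^b h\,{\rm d}_{\beta}\bigr|\leq\int_a^b|h|\,{\rm d}_{\beta}$ is most naturally obtained from Proposition D (monotony, using $-|h|\leq h\leq|h|$ and $a\leq s_0\leq b$) rather than from Proposition \ref{Abs-integral}, which concerns Riemann--Stieltjes integrators, though taking $u(x)=x$ there would also work.
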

\begin{proof}
We'll explore the ideas followed to prove \cite[Theorem 3, p. 640]{M:2002}.
We separate the proof in two parts, each one corresponding to one of the
inequalities of (\ref{Gruss-type-inequality}). In the classical case, the first
one is sometimes called \emph{pre-Gr\"uss inequality}.
\begin{enumerate}
\item[(i)] Consider the function $\,\Theta\,$ defined by
\begin{equation*}
\Theta(x):=g(x)-\frac{1}{b-a}\int_a^bg(t)\,{\rm d}_{\beta}t\,,\quad x\in [a,b]_ {\beta}\,,
\end{equation*}
and the corresponding \emph{positive} and \emph{negative} parts given by
\begin{equation*}
\Theta^+(x):=\frac{1}{2}\Big(\big|\Theta(x)\big|+\Theta(x)\Big)\,,\quad
\Theta^-(x):=\frac{1}{2}\Big(\big|\Theta(x)\big|-\Theta(x)\Big)\,,
\quad x\in\,[a,b]_ {\beta}\,.
\end{equation*}
One can easily see that
\begin{equation}\label{nonegativity-positive-negative-parts}
\Theta^+(x)\geq 0\,,\quad \Theta^-(x)\geq 0\,, \quad \forall x\in [a,b]_ {\beta}\,,
\end{equation}
and
\begin{equation}\label{properties-auxiliary-function}
\Theta^+(x)+\Theta^-(x)=\big|\Theta(x)\big|\,, \quad
\Theta^+(x)-\Theta^-(x)=\Theta(x)\,, \quad \forall x\in [a,b]_ {\beta}\,.
\end{equation}
A simple computation shows that
\begin{equation}\label{integral-auxiliary-function-null}
\int_a^b \Theta(x)\,{\rm d}_{\beta}x=
0\,,
\end{equation}
and also that
\begin{equation}\label{new-identity-for-Tchebyshev-functional}
T_{\beta}(f,g)=\frac{1}{b-a}\int_a^b f(x)\,\Theta(x)\,{\rm d}_{\beta}x\,.
\end{equation}
From (\ref{integral-auxiliary-function-null}) and (\ref{properties-auxiliary-function})
we obtain
\begin{equation*}
\int_a^b \Theta^+(x)\,{\rm d}_{\beta}x=
\int_a^b \Theta^-(x)\,{\rm d}_{\beta}x=
\frac{1}{2}\int_a^b \big|\Theta(x)\big|\,{\rm d}_{\beta}x\,.
\end{equation*}
Now, using the hypothesis (\ref{Gruss-type-hypothesis}) and inequalities
(\ref{nonegativity-positive-negative-parts}), we may obtain
$$m\,\Theta^+(x)\leq f(x)\,\Theta^+(x)\leq M\,\Theta^+(x)\,,\quad\forall
x\in [a,b]_{\beta}\,,$$
as well as
$$-M\,\Theta^-(x)\leq -f(x)\,\Theta^-(x)\leq
-m\,\Theta^-(x)\,,\quad \forall x\in [a,b]_ {\beta}\,.$$
Performing $\,\beta$-integration in $\,[a,b]\,$, respectively,
since $\,a\leq s_0\leq b\,$, then
\begin{equation}\label{integral-auxiliary-identities-2}
m\,\int_a^b \Theta^+(x)\,{\rm d}_{\beta}x\leq
\int_a^b f(x)\,\Theta^+(x)\,{\rm d}_{\beta}x\leq
M\,\int_a^b \Theta^+(x)\,{\rm d}_{\beta}x
\end{equation}
and
\begin{equation}
-M\,\int_a^b \Theta^-(x)\,{\rm d}_{\beta}x\leq
-\int_a^b f(x)\,\Theta^-(x)\,{\rm d}_{\beta}x\leq
-m\,\int_a^b \Theta^-(x)\,{\rm d}_{\beta}x\,.
\end{equation}
In a similar way, we may obtain
\begin{equation}
m\,\int_a^b \Theta^-(x)\,{\rm d}_{\beta}x\leq
\int_a^b f(x)\,\Theta^-(x)\,{\rm d}_{\beta}x\leq
M\,\int_a^b \Theta^-(x)\,{\rm d}_{\beta}x
\end{equation}
and
\begin{equation}\label{integral-auxiliary-identities-5}
-M\,\int_a^b \Theta^+(x)\,{\rm d}_{\beta}x\leq
-\int_a^b f(x)\,\Theta^+(x)\,{\rm d}_{\beta}x\leq
-m\,\int_a^b \Theta^+(x)\,{\rm d}_{\beta}x\,.
\end{equation}
Adding up (\ref{integral-auxiliary-identities-2})-(\ref{integral-auxiliary-identities-5}),
respectively, together with (\ref{properties-auxiliary-function}), we obtain
$$
-\frac{1}{2}\big(M-m\big)\int_a^b\big|\Theta(x)\big|\,{\rm d}_{\beta}x\leq
\int_a^b f(x)\,\Theta(x)\,{\rm d}_{\beta}x\leq
\frac{1}{2}\big(M-m\big)\int_a^b\big|\Theta(x)\big|\,{\rm d}_{\beta}x
$$
which, by (\ref{new-identity-for-Tchebyshev-functional}),
proves the first inequality of (\ref{Gruss-type-inequality}).

\item[(ii)] Using the Cauchy-Bunyakovsky-Schwarz inequality
(Theorem E with $\,p=2=p^{\prime}\,$), we may write
\begin{equation}\label{integral-auxiliary-identities-4}
\int_a^b\left|g(x)-\frac{\int_a^b g(t)\,{\rm d}_{\beta}t}{b-a}\right|\,{\rm d}_{\beta}x\leq
\sqrt{(b-a)}\sqrt{
\int_a^b\left(g(x)-\frac{\int_a^b g(t)\,{\rm d}_{\beta}t}{b-a}\right)^2\,{\rm d}_{\beta}x}\,.
\end{equation}
During the proof of the inequality (\ref{positivity}) of Proposition
\ref{positivity-proposition}, we saw that
\begin{equation*}
\begin{array}{lll}
\displaystyle
\int_a^b\left(g(x)-\frac{\int_a^b g(t)\,{\rm d}_{\beta}t}{b-a}\right)^2{\rm d}_{\beta}x
& = & \displaystyle\int_a^b g^2(x)\,{\rm d}_{\beta}x-
\frac{1}{b-a}\left(\int_ a^b g(x)\,{\rm d}_{\beta}x\right)^2 \\ [1em]
 & = & \displaystyle (b-a)\,T_{\beta}(g,g)\,,
\end{array}
\end{equation*}
so, the use of this last identity in (\ref{integral-auxiliary-identities-4}) gives
\begin{equation}\label{integral-auxiliary-identities-6}
\int_a^b\left|g(x)-\frac{1}{b-a}\int_a^b g(t)\,{\rm d}_{\beta}t\right|\,{\rm d}_{\beta}x
\leq(b-a)\sqrt{\,T_{\beta}(g,g)}\,.
\end{equation}
Finally, introducing inequality (\ref{integral-auxiliary-identities-6}) into the
right-hand side of the first inequality of (\ref{Gruss-type-inequality}), enables
to prove its second inequality.
\end{enumerate}
\end{proof}
\begin{proof}[Simpler proof, only for the first inequality of the Theorem
\ref{Gruss-type-inequality-theorem}] So that the reader can
explore different techniques, we present a much simpler proof of
the first inequality that figures in (\ref{Gruss-type-inequality})
of the Theorem \ref{Gruss-type-inequality-theorem}. We follow the
idea behind the proof of \cite[Theorem 1]{L:2007}.
Recalling (\ref{integral-auxiliary-function-null}),

\vspace{0.5em}
\centerline{$\displaystyle
\int_a^b \Theta(x)\,{\rm d}_{\beta}x=\int_a^b
\left(g(x)-\frac{1}{b-a}\int_a^b g(t)\,{\rm d}_{\beta}t\right){\rm d}_{\beta}x=0\,,$}

\vspace{0.5em}
\noindent then, by (\ref{new-identity-for-Tchebyshev-functional}), we have
\begin{equation*}
\begin{array}{lll}
\displaystyle\big|T_{\beta}(f,g)\big| & = & \displaystyle
\frac{1}{b-a}\,\left|\int_{a}^{b}f(x)\,g(x)\,{\rm d}_{\beta}x-
\frac{1}{b-a}\int_a^bf(x)\,{\rm d}_{\beta}x\,\int_a^bg(x)\,{\rm d}_{\beta}x\right| \\ [1.8em]
 & = & \displaystyle\frac{1}{b-a}\,\left|\int_{a}^{b}\left(g(x)-
\frac{1}{b-a}\int_a^b g(t)\,{\rm d}_{\beta}t\right)\,\left(f(x)-\frac{M+m}{2}\right)\right|
\,{\rm d}_{\beta}x \\ [1.8em]
 & \leq & \displaystyle\sup_{x\in [a,b]_{\beta}}\left|f(x)-\frac{M+m}{2}\right|\,
\frac{1}{b-a}\int_{a}^{b}\left|
g(x)-\frac{1}{b-a}\int_a^b g(t)\,{\rm d}_{\beta}t\right|{\rm d}_{\beta}x\,.
\end{array}
\end{equation*}
But, since the assumption $\,m\leq f(x)\leq M\,$ is equivalent to
$\,\left|f(x)-\frac{M+m}{2}\right|\leq\frac{M-m}{2}\,$, the first
inequality of (\ref{Gruss-type-inequality}) is proved.
\end{proof}

\section{$\beta$-Gr\"uss type inequalities in the context of the Riemann-Stieltjes
$\beta$-integral}\label{section-4}

\subsection{The Riemann-Stieltjes $\beta$-integral}

Originally the definition of the classical Riemann-Stieltjes integral was
associated to an increasing function. Later that notion was generalized
by using a function of bounded variation. Since every monotone function is of
bounded variation, we'll consider this more general latter approach.

Let $\,u\,$ be a function of bounded variation on the $\,\beta$-interval
$\,[a,b]_{\beta}\,$. Of course that, the classical bounded variation concept
of a function on $\,[a,b]\,$, implies the function to be of bounded variation on
$\,[a,b]_{\beta}\,$. Unless otherwise stated, we will
henceforth assume that $\,u\,$ satisfies this condition.
\begin{defn}\label{Riemann-Stieltjes-beta-integral}
We define the Riemann-Stieltjes $\beta$-integral of the function $\,f\,$ by
\begin{equation*}
\begin{array}{l}
\displaystyle\int_{a}^{b}f(x)\,d_{\beta}u(x):=\sum_{n=0}^{+\infty}f\big(\beta^k(b)\big)
\Big(u\big(\beta^k(b)\big)-u\big(\beta^{k+1}(b)\big)\Big)- \\ [0.5em]
\displaystyle\hspace{11em}\sum_{n=0}^{+\infty}f\big(\beta^k(a)\big)
\Big(u\big(\beta^k(a)\big)-u\big(\beta^{k+1}(a)\big)\Big).
\end{array}
\end{equation*}
We say that a function $\,f\,$ is Riemann-Stieltjes $\,\beta$-integrable on
$\,[a,b]\,$ (with respect to the function $\,u\,$) if
$\,\int_{a}^{b}f(x)d_{\beta}u(x)\,$ is a finite number.
\end{defn}
If $\,D_{\beta}[u](t)\equiv\big(D_{\beta}u\big)(t)=\frac{u(t)-u(\beta(t))}{t-\beta(t)}\,$
is bounded on $\,[a,b]_{\beta}\,$, in particular, when
$\,u^{\hspace{0.01em}\prime}\big(s_0\big)\,$ is finite, then we may write
$$u(t)-u\big(\beta(t)\big)=\big(D_{\beta}u\big)(t)\,\big(t-\beta(t)\big).$$
Therefore, comparing with the definition of the $\,\beta$-integral
(\ref{beta-integral-a-b})-(\ref{beta-integral-a-b-explicitly}), the above
definition for the Riemann-Stieltjes $\beta$-integral becomes
\begin{equation}\label{R-S-1}
\displaystyle\int_{a}^{b}f(x)d_{\beta}u(x)=
\int_{a}^{b}f(x)\big(D_{\beta}u\big)(x)\,d_{\beta}x\,.
\end{equation}

\subsection{$\beta$-Gr\"uss type inequality associated with the Riemann-Stieltjes
$\beta$-integral}

We state the following definition.
\begin{defn}
We say that a real function $\,u\,$ is $\,\beta-L$-Lipschitzian on
$\,[a,b]\,$ if
\begin{equation*}
|u(x)-u\big(\beta(x)\big)|\leq L|x-\beta(x)|\:,\quad
\mbox{for all}\;x\in [a,b]_{\beta}\,.
\end{equation*}
\end{defn}

\begin{rem}
\begin{enumerate}
\item If $\,u\,$ is $\,\beta-L$-Lipschitzian on $\,[a,b]\,$, then,
Definition \ref{Riemann-Stieltjes-beta-integral} of the Riemann-Stieltjes
$\,\beta$-integral, will be consistent when the function $\,f\,$ is
Riemann-Stieltjes $\,\beta$-integrable in $\,[a,b]\,$, since the difference
factors which appear in its definition are always of the form
$\,u\big(\beta^{k}(x)\big)-u\big(\beta^{k+1}(x)\big)\,$,
where $\,k\,$ is any nonnegative integer and $\,x=a\,$ or $\,x=b\,$. \\
Also, identity (\ref{R-S-1}) is clearly valid.
\item Nevertheless, notice that the definition of a $\,\beta-L$-Lipschitzian function
on $\,[a,b]\,$, does not necessarily imply the function to be continuous at
the fixed point $\,s_0\,$.
\end{enumerate}
\end{rem}

\begin{prop}\label{Abs-integral}
Let $\,f\,$ be absolutely $\,\beta$-integrable on $\,[a,b]\,$.
If $\,u\,$ is $\,\beta-L$-Lipschitzian on $\,[a,b]\,$ and $\,a\leq s_0\leq b\,$
then
\begin{equation}\label{Abs-integral-ineq}
\left|\int_{a}^{b}f(x)\,d_{\beta}u(x)\right|\leq L\int_{a}^{b}|f(x)|\,d_{\beta}x\,.
\end{equation}
\end{prop}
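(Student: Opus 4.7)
The plan is to unpack Definition \ref{Riemann-Stieltjes-beta-integral} explicitly, apply the triangle inequality term by term to the two resulting series, use the $\beta$-$L$-Lipschitz bound to convert $|u(\beta^k(x))-u(\beta^{k+1}(x))|$ into $L|\beta^k(x)-\beta^{k+1}(x)|$, and finally reassemble the resulting expression as $L\int_a^b|f(x)|\,d_\beta x$ via the definition (\ref{beta-integral-a-b-explicitly}) of the $\beta$-integral.

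First, I would write
\begin{equation*}
\int_{a}^{b}f(x)\,d_{\beta}u(x)=\sum_{k=0}^{+\infty}f\big(\beta^k(b)\big)\Big(u\big(\beta^k(b)\big)-u\big(\beta^{k+1}(b)\big)\Big)-\sum_{k=0}^{+\infty}f\big(\beta^k(a)\big)\Big(u\big(\beta^k(a)\big)-u\big(\beta^{k+1}(a)\big)\Big),
\end{equation*}
and apply the triangle inequality to both the outer difference and to each infinite sum (the absolute convergence needed to justify this is guaranteed once we combine the Lipschitz bound with the hypothesis that $|f|$ is $\beta$-integrable). The $\beta$-$L$-Lipschitz condition, applied at the points $\beta^k(a)$ and $\beta^k(b)$, gives
\begin{equation*}
\big|u\big(\beta^k(x)\big)-u\big(\beta^{k+1}(x)\big)\big|\leq L\,\big|\beta^k(x)-\beta^{k+1}(x)\big|,\qquad x\in\{a,b\},\;k\in\mathbb{N}_0.
\end{equation*}

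The key bookkeeping step is to control the sign of $\beta^k(x)-\beta^{k+1}(x)$ using the standing hypothesis $a\leq s_0\leq b$ together with condition (\ref{condition1}). Since $b\geq s_0$ forces $\beta(b)\leq b$ and, inductively, $\beta^{k+1}(b)\leq\beta^k(b)$, we get $\beta^k(b)-\beta^{k+1}(b)\geq 0$; analogously, $a\leq s_0$ gives $\beta^k(a)-\beta^{k+1}(a)\leq 0$. Hence $|\beta^k(b)-\beta^{k+1}(b)|=\beta^k(b)-\beta^{k+1}(b)$ and $|\beta^k(a)-\beta^{k+1}(a)|=-(\beta^k(a)-\beta^{k+1}(a))$. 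Inserting these into the Lipschitz estimate yields
\begin{equation*}
\left|\int_{a}^{b}f(x)\,d_{\beta}u(x)\right|\leq L\sum_{k=0}^{+\infty}\big(\beta^k(b)-\beta^{k+1}(b)\big)\big|f\big(\beta^k(b)\big)\big|-L\sum_{k=0}^{+\infty}\big(\beta^k(a)-\beta^{k+1}(a)\big)\big|f\big(\beta^k(a)\big)\big|,
\end{equation*}
and the right-hand side is exactly $L\int_a^b|f(x)|\,d_\beta x$ by (\ref{beta-integral-a-b-explicitly}) applied to $|f|$, which is the desired inequality (\ref{Abs-integral-ineq}).

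I do not anticipate a serious obstacle here: the only slightly delicate point is making sure the manipulation of infinite sums is legal, but this is immediate since the absolute $\beta$-integrability of $f$ together with the Lipschitz bound shows both series are dominated by the convergent series defining $\int_a^b|f|\,d_\beta x$. The sign analysis based on (\ref{condition1}) and Proposition~A is where the hypothesis $a\leq s_0\leq b$ is essential; without it, the cancellation used to rewrite the final expression as a single $\beta$-integral of $|f|$ would fail.
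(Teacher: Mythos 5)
Your proposal is correct and follows essentially the same route as the paper's proof: triangle inequality on the two series defining the Riemann--Stieltjes $\beta$-integral, the $\beta$-$L$-Lipschitz bound on each increment $u\big(\beta^k(x)\big)-u\big(\beta^{k+1}(x)\big)$, the sign analysis of $\beta^k(b)-\beta^{k+1}(b)\geq 0$ and $\beta^k(a)-\beta^{k+1}(a)\leq 0$ under $a\leq s_0\leq b$ (which the paper imports from the monotonicity statements (13)--(14) of \cite{C:2021} rather than rederiving from (\ref{condition1}) as you do), and reassembly into $L\int_a^b|f|\,{\rm d}_{\beta}x$ via (\ref{beta-integral-a-b-explicitly}). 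Your extra remark justifying the term-by-term manipulation of the infinite sums by domination is a welcome addition the paper leaves implicit.
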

\begin{proof}
In fact, from Definition \ref{Riemann-Stieltjes-beta-integral},
\begin{equation}\label{Inequality-0}
\begin{array}{l}
\displaystyle\left|\int_{a}^{b}f(x)\,d_{\beta}u(x)\right|\leq
\sum_{n=0}^{+\infty}\Big|f\big(\beta^k(b)\big)\Big|
\Big|u\big(\beta^k(b)\big)-u\big(\beta^{k+1}(b)\big)\Big| \\ [0.5em]
\displaystyle\hspace{12em}+\sum_{n=0}^{+\infty}\Big|f\big(\beta^k(a)\big)\Big|
\Big|u\big(\beta^k(a)\big)-u\big(\beta^{k+1}(a)\big)\Big|\,,
\end{array}
\end{equation}
hence, because $\,u\,$ is $\beta-L$-Lipschitzian on $\,[a,b]\,$,
\begin{equation*}
\begin{array}{l}
\displaystyle\left|\int_{a}^{b}f(x)\,d_{\beta}u(x)\right|\leq
L\left[\:\sum_{n=0}^{+\infty}\Big|f\big(\beta^k(b)\big)\Big|\,
\Big|\beta^k(b)-\beta^{k+1}(b)\Big|\right. \\ [0.5em]
\displaystyle\hspace{12em}\left.+\sum_{n=0}^{+\infty}\Big|f\big(\beta^k(a)\big)\Big|\,
\Big|\beta^k(a)-\beta^{k+1}(a)\Big|\,\right]\,.
\end{array}
\end{equation*}
Now, since $\,a\leq s_0\leq b\,$ we know (see (13) and (14) of \cite[p. 562]{C:2021})
that the sequence $\,\left\{\beta^k(a)\right\}_{k\in\mathbb{N}}\,$ is strictly
increasing while $\,\left\{\beta^k(b)\right\}_{k\in\mathbb{N}}\,$ is strictly
decreasing.
Also because $\,f\,$ is absolutely $\,\beta$-integrable on $\,[a,b]\,$, then
we may write
\begin{equation*}
\begin{array}{l}
\displaystyle\left|\int_{a}^{b}f(x)\,d_{\beta}u(x)\right|\leq
L\left[\:\sum_{n=0}^{+\infty}\Big|f\big(\beta^k(b)\big)\Big|
\Big(\beta^k(b)-\beta^{k+1}(b)\Big)\right. \\ [0.5em]
\displaystyle\hspace{3em}\left.-\sum_{n=0}^{+\infty}\Big|f\big(\beta^k(a)\big)\Big|
\Big(\beta^k(a)-\beta^{k+1}(a)\Big)\,\right]= L\int_{a}^{b}\big|f(x)\big|\,d_{\beta}x\,. 
\end{array}
\end{equation*}
\end{proof}
Then, we may state the following theorem. In its proof, we'll use a similar
approach to the one followed in \cite{DF:1998}, which, in turn, uses the techniques
of \cite[Chapter X]{MPF:1993}.
\begin{thm}\label{Gruss-type-Riemann-Stieltjes-theorem}
Let $\,f:\,[a,b]\to\mathbb{R}\,$ be an absolutely
$\,\beta$-integrable function on $\,[a,b]\,$, with
$\,a<s_0<b\,$, and suppose that there exist real numbers
$\,m\,$ and $\,M\,$ such that
\begin{equation*}
m\leq f(x)\leq M,\quad \mbox{for all}\;\:x\in [a,b]_{\beta}\,.
\end{equation*}
If $\,u\,$ is $\,\beta-L$-Lipschitzian on $\,[a,b]\,$ and
$\,\displaystyle\lim_{n\to\infty}u\left(\beta^{n}(x)\right)\,$ exist, whenever
$\,x=a\,$ and $\,x=b\,$, then the following inequality
\begin{equation}\label{general-Gruss-type-inequality}
\left|\int_{a}^{b}f(x)\,d_{\beta}u(x)-\frac{u(b)-u(a)-\left(u(s_0^+)-u(s_0^-)\right)}{b-a}
\int_{a}^{b}f(t)\,d_{\beta}t\right|\leq\frac{1}{2}L(M-m)(b-a)
\end{equation}
holds, being the constant $\,\frac{1}{2}\,$ the best possible.
\end{thm}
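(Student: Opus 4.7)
The plan is to reduce the Riemann--Stieltjes $\beta$-integral to an ordinary $\beta$-integral via identity (\ref{R-S-1}) so that the expression inside the absolute value on the left-hand side of (\ref{general-Gruss-type-inequality}) becomes exactly $(b-a)$ times the Chebyshev functional $T_\beta(f,D_\beta u)$, and then apply the pre-Gr\"uss inequality of Theorem \ref{Gruss-type-inequality-theorem}. The $\beta$-$L$-Lipschitz hypothesis on $u$ immediately gives $|(D_\beta u)(x)|\leq L$ for every $x\in[a,b]_\beta$, which both legitimizes (\ref{R-S-1}) and controls the auxiliary Chebyshev term that will appear.

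First I would use (\ref{R-S-1}) to rewrite $\int_a^b f\,d_\beta u$ as $\int_a^b f\cdot(D_\beta u)\,d_\beta x$. Then, by Theorem B(ii) (whose hypothesis on the jump at $s_0$ is delivered by the assumption that $\lim_{n\to\infty}u(\beta^n(x))$ exists for $x=a,b$, which is precisely the existence of the one-sided limits $u(s_0^\pm)$), one has
\begin{equation*}
\frac{1}{b-a}\int_a^b(D_\beta u)(x)\,d_\beta x=\frac{u(b)-u(a)-\bigl(u(s_0^+)-u(s_0^-)\bigr)}{b-a}.
\end{equation*}
Combining the two identities shows that the quantity inside the absolute value in (\ref{general-Gruss-type-inequality}) equals $(b-a)\,T_\beta(f,D_\beta u)$.

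Now I would invoke Theorem \ref{Gruss-type-inequality-theorem} with $g:=D_\beta u$. Since $m\leq f\leq M$ on $[a,b]_\beta$, it delivers
\begin{equation*}
|T_\beta(f,D_\beta u)|\leq\tfrac{1}{2}(M-m)\sqrt{T_\beta(D_\beta u,D_\beta u)}.
\end{equation*}
Finally I would show $\sqrt{T_\beta(D_\beta u,D_\beta u)}\leq L$: because $(D_\beta u)^2\leq L^2$ on $[a,b]_\beta$ and $a\leq s_0\leq b$, Proposition D gives $\int_a^b(D_\beta u)^2\,d_\beta x\leq L^2(b-a)$, and discarding the non-negative squared-mean term in the definition of $T_\beta(\cdot,\cdot)$ yields $T_\beta(D_\beta u,D_\beta u)\leq L^2$. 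Multiplying through by $(b-a)$ delivers the announced bound $\tfrac{1}{2}L(M-m)(b-a)$.

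The step I expect to be the real obstacle is the sharpness of the constant $\tfrac{1}{2}$. Following the template of \cite[Chapter X]{MPF:1993} and \cite{DF:1998}, I would try to exhibit an extremal (or near-extremal) pair: take $u(x)=L(x-s_0)$, so that $u$ is trivially $\beta$-$L$-Lipschitz with $D_\beta u\equiv L$, and choose a two-valued function $f$ on $[a,b]_\beta$ attaining only the values $m$ and $M$ with $\beta$-mean equal to $(M+m)/2$, chosen so that the extra factor $f-\tfrac{M+m}{2}$ appearing in the pre-Gr\"uss estimate has constant sign on each of the descending branches $\{\beta^k(b)\}_k$ and $\{\beta^k(a)\}_k$. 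The subtle point is that the discrete character of $[a,b]_\beta$ may prevent the bound from being attained for a single pair; one then argues sharpness via a suitable family of step functions whose $\beta$-means converge to $(M+m)/2$, verifying that the ratio of left-hand side to $L(M-m)(b-a)$ tends to $\tfrac{1}{2}$.
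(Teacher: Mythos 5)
Your derivation of the inequality itself is correct and follows a genuinely different route from the paper. You reduce everything to the Chebyshev functional: via (\ref{R-S-1}) and Theorem B the left-hand side becomes $(b-a)\,\bigl|T_{\beta}(f,D_{\beta}u)\bigr|$, and then Theorem \ref{Gruss-type-inequality-theorem} with $g=D_{\beta}u$ together with the elementary bound $T_{\beta}(D_{\beta}u,D_{\beta}u)\leq\frac{1}{b-a}\int_a^b(D_{\beta}u)^2\,{\rm d}_{\beta}x\leq L^2$ gives the result. The paper instead never passes through $T_{\beta}$: it writes the left-hand side as $\bigl|\int_a^b\bigl(f(x)-\frac{1}{b-a}\int_a^bf\,{\rm d}_{\beta}t\bigr)\,d_{\beta}u(x)\bigr|$, pulls out $L$ via Proposition \ref{Abs-integral}, and then bounds $\int_a^b|f-\bar f|\,{\rm d}_{\beta}x$ by $\frac12(M-m)(b-a)$ through the variance estimate $I\leq\frac14(M-m)^2$ and Cauchy--Schwarz. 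Your route recycles the Section \ref{section-3} machinery and is arguably cleaner; the paper's route is self-contained within Section \ref{section-4} and keeps the roles of the two hypotheses (Lipschitz on $u$, boundedness of $f$) visibly separated. Both are valid for the inequality.

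The sharpness argument, however, has a genuine flaw. With your choice $u(x)=L(x-s_0)$ one has $D_{\beta}u\equiv L$ constant, and the Chebyshev functional of any $f$ against a constant vanishes identically: $T_{\beta}(f,L)=0$. Hence the left-hand side of (\ref{general-Gruss-type-inequality}) is exactly $0$ for \emph{every} $f$, and no choice of two-valued $f$, nor any limiting family of step functions, can make the ratio approach $\frac12$. The obstruction is not the discreteness of $[a,b]_{\beta}$ but the fact that an extremal $u$ must have $D_{\beta}u$ non-constant and maximally correlated with $f-\bar f$. The paper's extremal pair is $u(x)=\bigl|x-\frac{a+b}{2}\bigr|$ and $f(x)=sgn\bigl(x-\frac{a+b}{2}\bigr)$, for which $L=1$, $M-m=2$, $u(b)=u(a)$, $u(s_0^+)=u(s_0^-)$, and $\int_a^bf\,d_{\beta}u=b-a$, so equality holds. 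You would need to replace your candidate by something of this type to close the sharpness claim.
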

\begin{proof}
Since $\,a<s_0<b\,$ and
$\,\displaystyle\lim_{n\to\infty}u\left(\beta^{n}(x)\right)\,$ exist when
$\,x=a\,$ or $\,x=b\,$, and, also, by (i) of Lemma 2.1 in \cite[p. 3]{HS:2015},
(i) of Theorem B and (13)-(14) of \cite{C:2021}, we conclude that
\begin{equation}\label{general-Beta-Fundamental-Theorem}
\begin{array}{c}
\displaystyle\int_{a}^{b}d_{\beta}u(x)=\int_{a}^{b}\left(D_{\beta}u\right)(x)d_{\beta}x= 
u(b)-u(a)-\left(\displaystyle\lim_{n\to\infty}u\left(\beta^{n}(b)\right)-
\displaystyle\lim_{n\to\infty}u\left(\beta^{n}(a)\right)\right) \\ [1em]
\hspace{5em}\displaystyle =u(b)-u(a)-\left(u(s_0^+)-u(s_0^-)\right)\,,
\end{array}
\end{equation}
thus,
\begin{equation*}
\begin{array}{l}\label{general-identity}
\displaystyle
\left|\int_{a}^{b}f(x)\,d_{\beta}u(x)-\frac{u(b)-u(a)-\left(u(s_0^+)-u(s_0^-)\right)}{b-a}
\int_{a}^{b}f(t)\,d_{\beta}t\right|= \\ [1em]
\hfill\displaystyle
\left|\int_{a}^{b}\left(f(x)-\frac{\int_{a}^{b}f(t)\,d_{\beta}t}{b-a}\right)\,
d_{\beta}u(x)\right|\,.
\end{array}
\end{equation*}
Now, under the hypothesis on $\,u\,$, inequality (\ref{Abs-integral-ineq}) of
Proposition \ref{Abs-integral} holds. Hence,
\begin{equation}
\begin{array}{l}\label{general-inequality}
\displaystyle\left|\int_{a}^{b}f(x)\,d_{\beta}u(x)-\frac{u(b)-u(a)-\left(u(s_0^+)-u(s_0^-)\right)}{b-a}
\int_{a}^{b}f(t)\,d_{\beta}t\right|\leq \\ [1em]
\hfill\displaystyle
L\int_{a}^{b}\left|f(x)-\frac{\int_{a}^{b}f(t)\,d_{\beta}t}{b-a}\right|d_{\beta}x\,.
\end{array}
\end{equation}
Let us take the following definition:
$$I:=\frac{1}{b-a}
\int_{a}^{b}\left(f(x)-\frac{1}{b-a}\int_{a}^{b}f(t)\,d_{\beta}t\right)^2d_{\beta}x\,.
$$
Proceeding in a similar way followed to prove Proposition \ref{positivity-proposition},
we obtain
\begin{equation*}
I=\frac{1}{b-a}\int_{a}^{b}f^2(x)\,d_{\beta}x-
\left(\frac{1}{b-a}\int_{a}^{b}f(t)\,d_{\beta}t\right)^2\,,
\end{equation*}
therefore
$$I=\left(M-\frac{\int_{a}^{b}\!f(t)\,d_{\beta}t}{b-a}\right)\!
\left(\frac{\int_{a}^{b}\!f(t)\,d_{\beta}t}{b-a}-m\right)-
\frac{1}{b-a}\int_{a}^{b}\!\big(M-f(t)\big)\big(f(t)-m\big)\,d_{\beta}t\,.$$
Since $\,m\leq f(x)\leq M\,$ on $\,[a,b]_{\beta}\,$ and $\,a\leq s_0\leq b\,$
then, by \cite[Lemma 3.14, p. 17]{HSSA:2015},
$$\;\int_{a}^{b}\big(M-f(t)\big)\big(f(t)-m\big)\,d_{\beta}t\geq 0\,,$$
thus
\begin{equation}
I\leq \left(M-\frac{1}{b-a}\int_{a}^{b}f(t)\,d_{\beta}t\right)
\left(\frac{1}{b-a}\int_{a}^{b}f(t)\,d_{\beta}t-m\right)\,.
\end{equation}
We know that
$$\,(M-\theta)(\theta-m)\leq\frac 1 4\left[(M-\theta)+(\theta-m)\right]^2=
\frac 1 4\big(M-m\big)^2$$
thereby, on one hand we have
\begin{equation}\label{Inq-1}
I\leq \frac 1 4\big(M-m\big)^2\,.
\end{equation}
On the other hand, by the $\beta$-Cauchy-Buniakowski-Schwarz inequality
(take $p=2$ in Theorem E) one has
\begin{equation}\label{Inq-2}
I\geq \left[\frac{1}{b-a}\int_{a}^{b}\left|f(x)-
\frac{1}{b-a}\int_{a}^{b}f(t)\,d_{\beta}t\right|d_{\beta}x\right]^2\,,
\end{equation}
so, by (\ref{Inq-1}) and (\ref{Inq-2}) one obtains
\begin{equation*}
\int_{a}^{b}\left|f(x)-\frac{1}{b-a}\int_{a}^{b}f(t)\,d_{\beta}t\right|d_{\beta}x
\leq \frac 1 2 (M-m)(b-a)\,.
\end{equation*}
Finally, the introduction of this last inequality into (\ref{general-inequality}),
leads to (\ref{general-Gruss-type-inequality}).

\vspace{0.5em}
\noindent \textbf{Step 2}. \\
Now we prove, under the generality of inequality (\ref{general-Gruss-type-inequality}),
that we can not choose a constant smaller than $\,\frac 1 2$.
Consider the functions $\,u\,$ and $\,f\,$ defined on $\,[a,b]_{\beta}\,$ by
\begin{equation}\label{particular-choice}
u(x):=\left|x-\frac{a+b}{2}\right|\;, \quad f(x):=sgn\left(x-\frac{a+b}{2}\right)\,.
\end{equation}
We have $\,-1\leq f(x) \leq 1\,$ in $\,[a,b]_{\beta}\,$ so $\,M-m=2\,$, thus
\begin{equation*}
\displaystyle\big|\,u(x)-u\big(\beta(x)\big)\,\big|\,=\,
\left|\;\left|x-\frac{a+b}{2}\right|-\left|\beta(x)-\frac{a+b}{2}\right|\;\right|
\,\leq\,\big|\,x-\beta(x)\,\big|\;,\quad \forall\,x\in [a,b]_{\beta}\,,
\end{equation*}
which shows that $\,u\,$ is a $\,\beta-L$-Lipshitzian function with $\,L=1$.
Hence, the validity of Definition \ref{Riemann-Stieltjes-beta-integral} and of identity
(\ref{general-Beta-Fundamental-Theorem}) are guaranteed. As a consequence of
these facts, on one hand we have
\begin{equation}\label{Identity-1}
\frac 1 2 L(M-m)(b-a)=b-a
\end{equation}
and $\,u(b)=\frac{b-a}{2}=u(a)\,.$ On the other hand, by the continuity of
this particular choice of the function $\,u\,$, $\,u(s_0^+)=u(s_0^-)\,$, therefore,
\begin{equation*}
\begin{array}{l}
\displaystyle
\left|\int_{a}^{b}f(x)\,d_{\beta}u(x)-\frac{u(b)-u(a)-\left(u(s_0^+)-u(s_0^-)\right)}{b-a}
\int_{a}^{b}f(t)\,d_{\beta}t\right|= \\ [1.4em]
\hspace{5em}\displaystyle
\int_{a}^{b}sgn\left(x-\frac{a+b}{2}\right)\,d_{\beta}u(x)=
-\int_{a}^{\frac{a+b}{2}}\,d_{\beta}u(x)+\int_{\frac{a+b}{2}}^{b}\,d_{\beta}u(x)\,,
\end{array}
\end{equation*}
thus, using (\ref{general-Beta-Fundamental-Theorem}) for this particular case,
\begin{equation}
\begin{array}{c}\label{Identity-2}
\displaystyle
\left|\int_{a}^{b}f(x)\,d_{\beta}u(x)-\frac{u(b)-u(a)-\left(u(s_0^+)-u(s_0^-)\right)}{b-a}
\int_{a}^{b}f(t)\,d_{\beta}t\right|\,=\, \\ [1em]
\displaystyle\frac{b-a}{2}+\frac{b-a}{2}\,=\,b-a\,.
\end{array}
\end{equation}
Identities (\ref{Identity-1}) and (\ref{Identity-2}) proves that the equality is
attained for this particular choice of $\,u\,$ and $\,f\,$.
\end{proof}
\begin{rem}
We can enlarge the restriction $\,a<s_0<b\,$ of Theorem
\ref{Gruss-type-Riemann-Stieltjes-theorem} to $\,a\leq s_0\leq b\,$.
This last general hypothesis, together with the fixed restriction $\,a<b\,$ at
the end of subsection \ref{beta-integral}, implies a small change in the corresponding
statement: if $\,a=s_0\,$, then we go to the statement of the theorem and replace
$\,u\big(s_0^-\big)\,$ by $\,u\left(s_0\right)\,$, while if $\,b=s_0\,$, then we
replace $\,u\big(s_0^+\big)\,$ by $\,u\left(s_0\right)\,$.
Similar generalizations with the corresponding adaptations can be performed in
some of the next corollaries.
\end{rem}

\subsection{Some particular cases}
The following corollary is an immediate consequence of Theorem
\ref{Gruss-type-Riemann-Stieltjes-theorem}.
\begin{cor}\label{corollary-1}
Let $\,f:\,[a,b]\to\mathbb{R}\,$ be an absolutely $\,\beta$-integrable function
on $\,[a,b]\,$, with $\,a<s_0<b\,$, and suppose that there
exist real numbers $\,m\,$ and $\,M\,$ such that
\begin{equation*}
m\leq f(x)\leq M,\quad \mbox{for all}\;\:x\in [a,b]_{\beta}\,.
\end{equation*}
If $\,u\,$ is $\,\beta-L$-Lipschitzian on $\,[a,b]_{\beta}\,$ and
$\,u(s_0^+)=u(s_0^-)\,$, then the following inequality
\begin{equation*}
\left|\int_{a}^{b}f(x)\,d_{\beta}u(x)-\frac{u(b)-u(a)}{b-a}
\int_{a}^{b}f(t)\,d_{\beta}t\right|\leq\frac{1}{2}L(M-m)(b-a)
\end{equation*}
holds, being the constant $\,\frac{1}{2}\,$ the best possible. In
particular, this is true when $\,u\,$ is $\,\beta-L$-Lipschitzian
on $\,[a,b]_{\beta}\,$ and continuous on $\,s_0\,$.
\end{cor}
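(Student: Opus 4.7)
The plan is to deduce Corollary \ref{corollary-1} directly from Theorem \ref{Gruss-type-Riemann-Stieltjes-theorem} by observing that the extra hypothesis $u(s_0^+)=u(s_0^-)$ makes the jump term in (\ref{general-Gruss-type-inequality}) vanish. Specifically, under the assumption $u(s_0^+)=u(s_0^-)$, the quantity $u(s_0^+)-u(s_0^-)$ equals $0$, so the factor
$$\frac{u(b)-u(a)-\bigl(u(s_0^+)-u(s_0^-)\bigr)}{b-a}$$
appearing in Theorem \ref{Gruss-type-Riemann-Stieltjes-theorem} simplifies to $\frac{u(b)-u(a)}{b-a}$. Substituting into (\ref{general-Gruss-type-inequality}) yields the claimed inequality immediately, with the same upper bound $\frac{1}{2}L(M-m)(b-a)$.

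For the sharpness part, I would recycle the extremal pair $(u,f)$ constructed in the second step of the proof of Theorem \ref{Gruss-type-Riemann-Stieltjes-theorem}, namely
$$u(x):=\left|x-\tfrac{a+b}{2}\right|,\qquad f(x):=\mathrm{sgn}\!\left(x-\tfrac{a+b}{2}\right),$$
as defined in (\ref{particular-choice}). The key observation is that this $u$ is continuous everywhere on $\mathbb{R}$, and in particular at $s_0$, so $u(s_0^+)=u(s_0^-)=u(s_0)$ and the hypotheses of the present corollary are fulfilled. The calculation already carried out there, via (\ref{Identity-1}) and (\ref{Identity-2}), shows that equality is attained for this pair, so no constant strictly smaller than $\frac{1}{2}$ can work.

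Finally, for the last sentence of the statement, I would simply note that if $u$ is $\beta$-$L$-Lipschitzian on $[a,b]_{\beta}$ and continuous at $s_0$, then by definition of continuity $u(s_0^+)=u(s_0)=u(s_0^-)$, so the standing hypothesis $u(s_0^+)=u(s_0^-)$ is a special case and the first part of the corollary applies.

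There is no real obstacle here: the whole argument is a short rewriting of Theorem \ref{Gruss-type-Riemann-Stieltjes-theorem} under a simplifying assumption. The only thing worth double-checking is that the extremal example from the theorem does satisfy the stronger hypothesis of the corollary (it does, by continuity of the absolute value function), so that the best-constant assertion is not weakened when passing to this more restricted setting.
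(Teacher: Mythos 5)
Your proposal is correct and matches the paper's approach: the paper states Corollary \ref{corollary-1} as an immediate consequence of Theorem \ref{Gruss-type-Riemann-Stieltjes-theorem}, obtained precisely by setting $u(s_0^+)-u(s_0^-)=0$ in (\ref{general-Gruss-type-inequality}), and the extremal pair (\ref{particular-choice}) is indeed continuous at $s_0$, so the sharpness claim carries over as you argue.
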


Let us now consider the following definition.
\begin{defn}\label{L-Lipschitz}
We say that a real function $\,u\,$ is $\,L$-Lipschitzian on $\,[a,b]_{\beta}\,$ if
\begin{equation*}
|u(x)-u(y)|\leq L|x-y|\:,\quad \mbox{for all}\;\:x, y\in [a,b]_{\beta}\,.
\end{equation*}
\end{defn}
We would like to draw your attention to the observations in the following remark.
\begin{rem}\label{remark-on-continuity-at-s0}
If $\,u\,$ is $\,L$-Lipschitzian on $\,[a,b]_{\beta}\,$, then:
\begin{enumerate}
\item of course, $\,u\,$ it is also $\,\beta-L$-Lipschitzian on $\,[a,b]\,$;
\item $\,u\,$ will be of bounded variation on $\,[a,b]_{\beta}\,$ and so, the
Definition \ref{Riemann-Stieltjes-beta-integral} is reliable whenever $\,f\,$ is an absolutely
$\,\beta$-integrable function. Identity (\ref{R-S-1}) is also valid;
\item if we restrict ourselves to the set $\,[a,b]_{\beta}\,$ and consider the
topology induced by the usual topology of $\,\mathbb{R}\,$, then we may say that
$\,u\,$ is continuous at the fixed point $\,s_0\in\,[a,b]_{\beta}\,$;
\item within the context of the previous observation, $\,u\,$ is continuous at the
fixed point $\,s_0\in\,[a,b]_{\beta}\,$, so, by $\,(ii)\,$ of Theorem B,
we conclude that
\begin{equation}\label{Beta-Fundamental-Theorem}
\int_{a}^{b}D_{\beta}[u](t)\,d_{\beta}t=u(b)-u(a)\,.
\end{equation}
\end{enumerate}
\end{rem}
As a consequence of Definition \ref{L-Lipschitz} and of the Remark
\ref{remark-on-continuity-at-s0}, one may prove the next corollary of Theorem
\ref{Gruss-type-Riemann-Stieltjes-theorem}.
\begin{cor}\label{corollary-2}
Let $\,f:\,[a,b]\to\mathbb{R}\,$ be an absolutely $\,\beta$-integrable function
on $\,[a,b]\,$, with $\,a\leq s_0\leq b\,$, and suppose that there
exist real numbers $\,m\,$ and $\,M\,$ such that
\begin{equation*}
m\leq f(x)\leq M,\quad \mbox{for all}\;\:x\in [a,b]_{\beta}\,.
\end{equation*}
If $\,u\,$ is $\,L$-Lipschitzian on $\,[a,b]_{\beta}\,$, then the
following inequality
\begin{equation*}
\left|\int_{a}^{b}f(x)\,d_{\beta}u(x)-\frac{u(b)-u(a)}{b-a}
\int_{a}^{b}f(t)\,d_{\beta}t\right|\leq\frac{1}{2}L(M-m)(b-a)
\end{equation*}
holds, being the constant $\,\frac{1}{2}\,$ the best possible.
\end{cor}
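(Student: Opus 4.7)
The plan is to obtain Corollary \ref{corollary-2} as a direct consequence of Theorem \ref{Gruss-type-Riemann-Stieltjes-theorem} combined with the observations collected in Remark \ref{remark-on-continuity-at-s0}. The basic idea is that the global Lipschitz hypothesis upgrades the local $\beta$--Lipschitz hypothesis of Theorem \ref{Gruss-type-Riemann-Stieltjes-theorem} and simultaneously forces the jump correction term to disappear.

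First, I would check that all hypotheses of Theorem \ref{Gruss-type-Riemann-Stieltjes-theorem} are met. By item (1) of Remark \ref{remark-on-continuity-at-s0}, $L$-Lipschitz continuity on $[a,b]_{\beta}$ implies that $u$ is $\beta$--$L$-Lipschitzian on $[a,b]$. Next, by Proposition A, the sequence $\bigl\{\beta^{n}(x)\bigr\}_{n\in\mathbb{N}_{0}}$ converges to $s_{0}$ for $x\in\{a,b\}$, and by item (3) of Remark \ref{remark-on-continuity-at-s0} the function $u$ is continuous at $s_{0}$ in the topology induced on $[a,b]_{\beta}$, so
\[
\lim_{n\to\infty}u\bigl(\beta^{n}(x)\bigr)=u(s_{0}),\qquad x\in\{a,b\},
\]
guaranteeing that the limits appearing in Theorem \ref{Gruss-type-Riemann-Stieltjes-theorem} exist. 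Moreover, the same continuity gives $u(s_{0}^{+})=u(s_{0}^{-})=u(s_{0})$, so that the jump correction term $u(s_{0}^{+})-u(s_{0}^{-})$ vanishes in inequality (\ref{general-Gruss-type-inequality}).

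With these observations, for $a<s_{0}<b$ the inequality of the corollary is exactly (\ref{general-Gruss-type-inequality}) with vanishing correction; equivalently, it is Corollary \ref{corollary-1}. For the boundary cases $a=s_{0}$ or $b=s_{0}$ I would invoke the remark immediately following Theorem \ref{Gruss-type-Riemann-Stieltjes-theorem}, which explains how $u(s_{0}^{\pm})$ should be replaced by $u(s_{0})$; again the correction term is zero by continuity, so the stated inequality still follows.

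Finally, to show that the constant $\tfrac{1}{2}$ cannot be improved, I would reuse the extremal example constructed in Step~2 of the proof of Theorem \ref{Gruss-type-Riemann-Stieltjes-theorem}, namely
\[
u(x)=\left|x-\tfrac{a+b}{2}\right|,\qquad f(x)=\operatorname{sgn}\!\left(x-\tfrac{a+b}{2}\right).
\]
The function $u$ is globally Lipschitz on $\mathbb{R}$ (hence on $[a,b]_{\beta}$) with constant $L=1$, and equality was already established there; thus the same example witnesses sharpness in the present setting. The main (and only real) obstacle is the careful bookkeeping of the boundary cases $a=s_{0}$ and $b=s_{0}$, but this is handled transparently by the remark mentioned above.
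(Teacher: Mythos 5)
Your proposal is correct and follows essentially the same route as the paper: the paper also derives Corollary \ref{corollary-2} directly from Theorem \ref{Gruss-type-Riemann-Stieltjes-theorem} via Definition \ref{L-Lipschitz} and Remark \ref{remark-on-continuity-at-s0} (Lipschitz on $[a,b]_{\beta}$ implies $\beta$--$L$-Lipschitzian, existence of the limits along $\beta^{n}(a),\beta^{n}(b)$, and vanishing of the jump term by continuity at $s_{0}$), and establishes sharpness by reusing the extremal pair (\ref{particular-choice}) exactly as you do. Your explicit treatment of the boundary cases $a=s_{0}$ or $b=s_{0}$ via the remark following the theorem is a welcome bit of added care but does not change the argument.
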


\begin{rem}
\begin{enumerate}
\item This corollary is a $\,\beta$-analogue of Theorem 2.1 in \cite[p.287]{DF:1998}.
\item One can also prove that we can not choose a constant smaller than $\,\frac 1 2$
directly in the last corollary, by considering the same functions $\,u\,$ and
$\,f\,$ defined by (\ref{particular-choice}),
and realizing that
\begin{equation*}
\displaystyle\big|\,u(x)-u(y)\,\big|\,=\,
\Big|\;\left|x-\frac{a+b}{2}\right|-\left|y-\frac{a+b}{2}\right|\;\Big|
\,\leq\,\big|\,x-y\,\big|\;,\quad \mbox{for all}\;x\in [a,b]_{\beta}\,,
\end{equation*}
which shows that $\,u\,$ is a $\,L$-Lipshitzian function with $\,L=1$, thus
$\,u\,$ is $\,\beta-L$-Lipshitzian function with $\,L=1$.
So, we can follow the same steps to prove that the equality is attained
for this particular choice of $\,u\,$ and $\,f\,$.
\end{enumerate}
\end{rem}

We can also state and prove the following corollaries.
\begin{cor}\label{corollary-3}
Let $\,f:\,[a,b]\to\mathbb{R}\,$ be an absolutely $\,\beta$-integrable function
on $\,[a,b]\,$, with $\,a<s_0<b\,$, and suppose that there exist real numbers
$\,m\,$ and $\,M\,$ such that
\begin{equation*}
m\leq f(x)\leq M,\quad \mbox{for all}\;\:x\in [a,b]_{\beta}\,.
\end{equation*}
If $\,D_{\beta}u\,$ is bounded on $\,[a,b]_{\beta}\,$ and
$\,\displaystyle\lim_{n\to\infty}u\left(\beta^{n}(x)\right)\,$ exist, whenever
$\,x=a\,$ and $\,x=b\,$, then the following inequality
\begin{equation}\label{Gruss-type-inequality-3}
\begin{array}{l}
\displaystyle\left|\int_{a}^{b}f(x)\,d_{\beta}u(x)-\frac{u(b)-u(a)-\left(u(s_0^+)-u(s_0^-)\right)}{b-a}
\int_{a}^{b}f(t)\,d_{\beta}t\right|\leq \\ [1em]
\hspace{8em}\displaystyle\frac{1}{2}\|D_{\beta}u\|_{\infty}(M-m)(b-a)\,,
\end{array}
\end{equation}
holds, where $\,\displaystyle\|D_{\beta}u\|_{\infty}:=
\sup_{t\in [a,b]_{\beta}}\left|\left(D_{\beta}u\right)(t)\right|\,$.
The constant $\,\frac{1}{2}\,$ is the best possible.
\end{cor}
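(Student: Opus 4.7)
The plan is to recognise Corollary \ref{corollary-3} as an immediate consequence of Theorem \ref{Gruss-type-Riemann-Stieltjes-theorem} once the boundedness hypothesis on $D_{\beta}u$ is upgraded to a $\beta$-Lipschitz estimate. By the very definition of the $\beta$-derivative, for every $x\in[a,b]_{\beta}\setminus\{s_0\}$ one has
$$u\bigl(\beta(x)\bigr)-u(x)=\bigl(D_{\beta}u\bigr)(x)\bigl(\beta(x)-x\bigr),$$
so that
$$\bigl|u(x)-u\bigl(\beta(x)\bigr)\bigr|\le\|D_{\beta}u\|_{\infty}\,\bigl|x-\beta(x)\bigr|,$$
while at $x=s_0$ both sides vanish and the bound holds trivially. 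Hence $u$ is $\beta$-$L$-Lipschitzian on $[a,b]$ with $L:=\|D_{\beta}u\|_{\infty}$.

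With this observation in hand, the remaining hypotheses of Theorem \ref{Gruss-type-Riemann-Stieltjes-theorem} (the strict inequalities $a<s_0<b$, the bounds $m\le f\le M$, and the existence of $\lim_{n\to\infty}u\bigl(\beta^n(x)\bigr)$ for $x=a,b$) are all assumed in the corollary. Applying that theorem with $L=\|D_{\beta}u\|_{\infty}$ yields inequality (\ref{Gruss-type-inequality-3}) at once.

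For the sharpness of the constant $\tfrac{1}{2}$, I would recycle the extremal pair $u(x)=|x-(a+b)/2|$ and $f(x)=\mathrm{sgn}\bigl(x-(a+b)/2\bigr)$ already used at the end of the proof of Theorem \ref{Gruss-type-Riemann-Stieltjes-theorem}. The reverse triangle inequality gives $|D_{\beta}u(t)|\le 1$ for all admissible $t$, while whenever $\beta^k(a)$ and $\beta^{k+1}(a)$ lie strictly on the same side of $(a+b)/2$ one has $D_{\beta}u=\pm 1$; since $\beta^k(a)\to s_0$ and $s_0$ is interior to $[a,b]$, such indices $k$ occur for all large $k$, and similarly for $\beta^k(b)$. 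Therefore $\|D_{\beta}u\|_{\infty}=1$, so the right-hand side of (\ref{Gruss-type-inequality-3}) specialises to $\tfrac{1}{2}\cdot 1\cdot 2\cdot(b-a)=b-a$, exactly matching the value $b-a$ of the left-hand side computed in (\ref{Identity-2}) of the proof of Theorem \ref{Gruss-type-Riemann-Stieltjes-theorem}.

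I do not anticipate a substantive obstacle: the argument is a direct corollary of the already-established Theorem \ref{Gruss-type-Riemann-Stieltjes-theorem}. The only mild subtlety is verifying that $\|D_{\beta}u\|_{\infty}$ attains (rather than merely bounds) the value $1$ in the extremal example, which is handled by the convergence $\beta^k(a),\beta^k(b)\to s_0$ to an interior point combined with the strict monotonicity of $\beta$.
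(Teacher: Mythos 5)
Your proposal is correct and follows essentially the same route as the paper: the key observation in both is that boundedness of $D_{\beta}u$ yields $\bigl|u(t)-u\bigl(\beta(t)\bigr)\bigr|\leq\|D_{\beta}u\|_{\infty}\,|t-\beta(t)|$, so $u$ is $\beta$-$L$-Lipschitzian with $L=\|D_{\beta}u\|_{\infty}$ and Theorem \ref{Gruss-type-Riemann-Stieltjes-theorem} (or a rerun of its steps, as the paper phrases it) applies directly. Your explicit verification that $\|D_{\beta}u\|_{\infty}=1$ for the extremal pair is a small welcome addition that the paper leaves implicit.
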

\begin{proof}
To prove this corollary first notice that, since
$\,\left|\big(D_{\beta}u\big)(t)\right|\leq\|D_{\beta}u\|_{\infty}\,$ on
$\,[a,b]_{\beta}\,$, then
$$\left|u(t)-u\big(\beta(t)\big)\right|\leq\|D_{\beta}u\|_{\infty}|t-\beta(t)|\;,
\quad\mbox{for all}\quad t\in[a,b]_{\beta}\,,$$
which shows that $\,u\,$ is $\,\beta-$L-Lipschitzian in $\,[a,b]_{\beta}\,$, with
$\,L=\|D_{\beta}u\|_{\infty}\,$, whenever $\,\|D_{\beta}u\|_{\infty}\,$ is finite.
Using this last inequality in (\ref{Inequality-0}), one obtains
\begin{equation}\label{similar-to-prop}
\left|\int_{a}^{b}f(x)\,d_{\beta}u(x)\right|\leq
\|D_{\beta}u\|_{\infty}\int_{a}^{b}|f(x)|\,d_{\beta}x\,.
\end{equation}
This inequality is similar to (\ref{Abs-integral-ineq}) of Proposition
\ref{Abs-integral}. Again, we can use (\ref{general-Beta-Fundamental-Theorem}) and,
so, one may obtain (\ref{Gruss-type-inequality-3}) by performing the same steps
we followed to prove Theorem \ref{Gruss-type-Riemann-Stieltjes-theorem} but,
instead of using Proposition \ref{Abs-integral}, we now make use of inequality
(\ref{similar-to-prop}) to obtain (\ref{general-inequality}) with $\,L\,$ replaced
by $\,\|D_{\beta}u\|_{\infty}\,$.
%
\end{proof}

\begin{cor}\label{corollary-4}
Let $\,f:\,[a,b]\to\mathbb{R}\,$ be an absolutely $\,\beta$-integrable function
on $\,[a,b]\,$, with $\,a<s_0<b\,$, and suppose that there exist real numbers
$\,m\,$ and $\,M\,$ such that
\begin{equation*}
m\leq f(x)\leq M,\quad \mbox{for all}\;\:x\in [a,b]_{\beta}\,.
\end{equation*}
If $\,D_{\beta}u\,$ is bounded on $\,[a,b]_{\beta}\,$ and $\,u(s_0^+)=u(s_0^-)\,$,
then the following inequality
\begin{equation*}
\left|\int_{a}^{b}f(x)\,d_{\beta}u(x)-\frac{u(b)-u(a)}{b-a}
\int_{a}^{b}f(t)\,d_{\beta}t\right|\leq\frac{1}{2}\|D_{\beta}u\|_{\infty}(M-m)(b-a)
\end{equation*}
holds, where
$\,\displaystyle\|D_{\beta}u\|_{\infty}:=
\sup_{t\in [a,b]_{\beta}}\left|\left(D_{\beta}u\right)(t)\right|\,$.
The constant $\,\frac{1}{2}\,$ is the best possible.
\end{cor}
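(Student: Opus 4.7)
The plan is to derive Corollary \ref{corollary-4} as an immediate specialization of Corollary \ref{corollary-3}. The inequality in Corollary \ref{corollary-3} contains the correction term $u(s_0^+)-u(s_0^-)$ inside the coefficient of $\int_a^b f\,d_{\beta}t$; under the extra hypothesis $u(s_0^+)=u(s_0^-)$ imposed here, this correction vanishes and the coefficient collapses to $(u(b)-u(a))/(b-a)$. Since the remaining hypotheses on $f$ and on $D_{\beta}u$ coincide with those of Corollary \ref{corollary-3}, the inequality follows at once. The required existence of the limits $\lim_{n}u(\beta^{n}(x))$ for $x=a,b$ is automatic here: boundedness of $D_{\beta}u$ together with the telescoping identity $\sum_{k}|\beta^{k+1}(x)-\beta^{k}(x)|=|x-s_{0}|$ forces the sequences $\bigl(u(\beta^{n}(x))\bigr)$ to be Cauchy.

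For the optimality of the constant $\frac{1}{2}$, I would recycle the extremal pair from the proof of Theorem \ref{Gruss-type-Riemann-Stieltjes-theorem}: take
$$u(x):=\left|x-\tfrac{a+b}{2}\right|,\qquad f(x):=sgn\left(x-\tfrac{a+b}{2}\right).$$
Since $u$ is continuous on $[a,b]$, we have $u(s_0^+)=u(s_0^-)$ without further argument. The reverse triangle inequality gives $|u(\beta(t))-u(t)|\leq|\beta(t)-t|$, so $\|D_{\beta}u\|_{\infty}\leq 1$. Moreover, at any point $t\in[a,b]_{\beta}$ where $t$ and $\beta(t)$ lie strictly on the same side of the midpoint $(a+b)/2$, one has $D_{\beta}u(t)=\pm 1$, whence $\|D_{\beta}u\|_{\infty}=1$. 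With $M-m=2$, the right-hand side of the stated inequality becomes $b-a$, while the left-hand side also equals $b-a$ by the computation already carried out in the proof of Theorem \ref{Gruss-type-Riemann-Stieltjes-theorem} (using $u(a)=u(b)=(b-a)/2$). Equality is therefore attained.

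The main (mild) obstacle is confirming that $\|D_{\beta}u\|_{\infty}$ actually equals $1$ rather than being only bounded above by it. Because $\beta^{k}(a)\uparrow s_{0}$ and $\beta^{k}(b)\downarrow s_{0}$, for sufficiently large $k$ both consecutive iterates $\beta^{k}(x)$ and $\beta^{k+1}(x)$ lie strictly on the same side of $(a+b)/2$ (the side containing $s_{0}$, provided $s_{0}\neq(a+b)/2$), yielding $D_{\beta}u(\beta^{k}(x))=\pm 1$ at such points. In the degenerate case $s_{0}=(a+b)/2$ one may perturb the midpoint slightly; alternatively, the corollary inherits sharpness from Theorem \ref{Gruss-type-Riemann-Stieltjes-theorem} itself, since the extremal $u$ constructed there already satisfies the additional continuity hypothesis $u(s_0^+)=u(s_0^-)$ imposed here.
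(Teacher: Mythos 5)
Your proposal is correct and follows essentially the same route as the paper, which states Corollary \ref{corollary-4} without a separate proof precisely because it is the specialization of Corollary \ref{corollary-3} in which the term $u(s_0^+)-u(s_0^-)$ vanishes. Your two added observations are sound and welcome details the paper leaves implicit: the boundedness of $D_{\beta}u$ together with the telescoping bound $\sum_{k}\big|\beta^{k}(x)-\beta^{k+1}(x)\big|=|x-s_0|$ does force the sequences $u\big(\beta^{n}(x)\big)$ to be Cauchy, and for sharpness the upper bound $\|D_{\beta}u\|_{\infty}\leq 1$ already suffices to rule out any constant smaller than $\tfrac{1}{2}$, so the worry about whether $\|D_{\beta}u\|_{\infty}$ equals $1$ exactly is not actually needed for optimality.
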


\begin{rem}
\begin{enumerate}
\item Corollaries \ref{corollary-3} and \ref{corollary-4} are $\,\beta$-analogues of
Corollary 2.2 of \cite[p. 290]{DF:1998}.
\item In the classical case, a function that has finite derivative at a point must be
continuous at that point. However, the boundedness of  $\,D_{\beta}u\,$ in
$\,[a,b]_{\beta}\,$ does not imply the continuity of $\,u\,$ at the point
$\,s_0\in[a,b]_{\beta}\,$. This is the reason why, $\,u(s_0^+)-u(s_0^-)\,$
figures in (\ref{Gruss-type-inequality-3}) of Corollary \ref{corollary-3}.
\end{enumerate}
\end{rem}

\begin{cor}\label{corollary-5}
Let $\,f:\,[a,b]\to\mathbb{R}\,$ be absolutely $\,\beta$-integrable function
on $\,[a,b]\,$, with $\,a\leq s_0\leq b\,$, and such that there exist
real numbers $\,m\,$ and $\,M\,$ satisfying
\begin{equation*}
m\leq f(x)\leq M,\quad \mbox{for all}\;\:x\in [a,b]_{\beta}\,.
\end{equation*}
If $\,g:\,[a,b]\to\mathbb{R}\,$ is a nonnegative function on
$\,[a,b]\,$ that is continuous at $s_0$, then the following inequality
\begin{equation}\label{Gruss-type-inequality-5}
\left|\int_{a}^{b}f(x)\,g(x)\,d_{\beta}x-\frac{1}{b-a}\int_{a}^{b}g(t)\,d_{\beta}t
\int_{a}^{b}f(t)\,d_{\beta}t\right|\leq\frac{1}{2}\|g\|_{\infty}(M-m)(b-a)
\end{equation}
holds, with
$\,\displaystyle\|g\|_{\infty}:=\sup_{t\in [a,b]_{\beta}}\left|g(t)\right|<\infty\,$.
The constant $\,\frac{1}{2}\,$ is the best possible.
\end{cor}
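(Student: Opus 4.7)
The plan is to reduce Corollary \ref{corollary-5} to Corollary \ref{corollary-4} by introducing a suitable $\beta$-primitive of $g$. First I would define
$$u(x) := \int_{s_0}^{x} g(t)\, d_{\beta}t, \qquad x \in [a,b]_{\beta},$$
so that $u(s_0) = 0$ and, by the inverse relationship between $D_{\beta}$ and the $\beta$-integral, $D_{\beta}[u](t) = g(t)$ for every $t \in [a,b]_{\beta}$; at the fixed point this uses continuity of $g$ together with Proposition~A. Hence $\|D_{\beta}u\|_{\infty} = \|g\|_{\infty}$ is finite, so $D_{\beta}u$ is bounded on $[a,b]_{\beta}$ as required by Corollary \ref{corollary-4}.

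Next I would verify $u(s_0^+) = u(s_0^-)$. From the defining series (\ref{beta-integral-s0-x}) one has
$$|u(x)| \leq \|g\|_{\infty} \sum_{k=0}^{\infty}\bigl|\beta^k(x) - \beta^{k+1}(x)\bigr| = \|g\|_{\infty}\,|x - s_0|,$$
after telescoping (using monotonicity of $\{\beta^k(x)\}_{k}$ and Proposition~A), so $u(x) \to 0 = u(s_0)$ as $x \to s_0^{\pm}$. Continuity of $u$ at $s_0$ then lets Theorem~B(ii) furnish $u(b) - u(a) = \int_a^b g(t)\, d_{\beta}t$, while identity (\ref{R-S-1}) gives
$$\int_a^b f(x)\, d_{\beta}u(x) = \int_a^b f(x)\, g(x)\, d_{\beta}x.$$
Substituting both identifications into the conclusion of Corollary \ref{corollary-4} applied to $(f,u)$ produces (\ref{Gruss-type-inequality-5}) with constant $\tfrac{1}{2}$. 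The boundary cases $a = s_0$ or $b = s_0$ are covered by the remark following Theorem \ref{Gruss-type-Riemann-Stieltjes-theorem}, since continuity of $u$ at $s_0$ makes the correction term vanish anyway.

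For the sharpness of $\tfrac{1}{2}$, I would construct a tailored extremal pair rather than reuse the pair $\bigl(|x-(a+b)/2|,\operatorname{sgn}(x-(a+b)/2)\bigr)$ from Step~2 of Theorem \ref{Gruss-type-Riemann-Stieltjes-theorem}, because the $\beta$-derivative of $|x-(a+b)/2|$ is a sign-type function that violates the nonnegativity of $g$. A natural replacement is to take $g$ equal to the characteristic function of the upper half of $[a,b]_{\beta}$ (continuous at $s_0$ provided $s_0 \neq (a+b)/2$), paired with $f(x) = \operatorname{sgn}(x - (a+b)/2)$, and then verify equality by evaluating the two resulting $\beta$-sums.

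The principal obstacle is exactly this sharpness step: the nonnegativity constraint on $g$ prevents a verbatim transplant of the extremal example from Theorem \ref{Gruss-type-Riemann-Stieltjes-theorem}, and one must balance the contributions from the two halves of $[a,b]_{\beta}$ while arranging that the discontinuity of the chosen $g$ lies away from $s_0$. The remaining ingredients—the primitive construction, continuity at $s_0$, and the direct invocation of Corollary \ref{corollary-4}—are routine given the machinery already assembled.
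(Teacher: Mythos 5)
Your reduction to Corollary \ref{corollary-4} is essentially the paper's own argument: the paper sets $u=G$ with $G(x)=\int_a^x g(t)\,{\rm d}_{\beta}t$, cites Theorem 3.6 of Hamza et al.\ for $D_{\beta}G=g$, monotonicity (hence bounded variation) and continuity at $s_0$, and then applies (\ref{R-S-1}), (\ref{Beta-Fundamental-Theorem}) and Corollary \ref{corollary-4}. Your choice of base point $s_0$ instead of $a$, and your hand-made verification of $u(s_0^+)=u(s_0^-)$ via the telescoping bound $|u(x)|\le\|g\|_{\infty}\,|x-s_0|$, are harmless variants. Up to that point the proposal is correct and equivalent to the published proof.

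The gap is in the sharpness step. You rightly note that the extremal pair of Theorem \ref{Gruss-type-Riemann-Stieltjes-theorem} cannot be transplanted because $D_{\beta}\big|x-\tfrac{a+b}{2}\big|$ is not nonnegative, but your replacement does not attain equality. View $\int_a^b\cdot\,{\rm d}_{\beta}$ as integration against the positive measure $\mu$ assigning mass $\beta^k(b)-\beta^{k+1}(b)$ to $\beta^k(b)$ and mass $\beta^{k+1}(a)-\beta^k(a)$ to $\beta^k(a)$, so that $\mu$ has total mass $b-a$. With $c=\tfrac{a+b}{2}$, $g=\chi_{\{x>c\}}$, $f={\rm sgn}(x-c)$, and $P=\mu(\{x>c\})$, $Q=\mu(\{x<c\})$, one gets $\int fg\,{\rm d}\mu=P$, $\int g\,{\rm d}\mu=P$, $\int f\,{\rm d}\mu=P-Q$, so the left-hand side of (\ref{Gruss-type-inequality-5}) equals $\tfrac{2PQ}{P+Q}\le\tfrac{b-a}{2}$, whereas the right-hand side is $\tfrac12\cdot 1\cdot 2\cdot(b-a)=b-a$. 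Your example therefore reaches at most half of the bound and proves nothing about optimality. Worse, since the functional $\int f\big(g-\tfrac{1}{b-a}\int g\,{\rm d}\mu\big)\,{\rm d}\mu$ is linear in $f$ for fixed $g$ and linear in $g$ for fixed $f$, its maximum over $m\le f\le M$ and $0\le g\le\|g\|_{\infty}$ is attained at indicator-type pairs and is capped by $\tfrac14\|g\|_{\infty}(M-m)(b-a)$; so under the nonnegativity hypothesis on $g$ the constant $\tfrac12$ appears not to be best possible at all. To be fair, the paper's own proof silently omits the sharpness claim, so this defect is inherited from the statement rather than introduced by you---but your proposed construction does not repair it.
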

\begin{proof}
Using Theorem 3.6 of \cite[p. 14]{HSSA:2015}, we conclude that the function $\,G\,$,
defined on $\,[a,b]\,$ by $\,\displaystyle G(x)=\int_{a}^{x}g(t)d_{\beta}t\,$, is
non-decreasing, continuous at $\,s_0\,$ and is such that
$$\big(D_{\beta}G\big)(x)=g(x)\;,\quad \mbox{for all}\;x\in [a,b]\,.$$
Being non-decreasing, then $\,G\,$ is a function of bounded variation.
Thus, considering $\,u=G\,$, this assures the definition of the Riemann-Stieltjes
$\,\beta$-integral given in Definition \ref{Riemann-Stieltjes-beta-integral}.
We also know that $\,D_{\beta}u=D_{\beta}G=g\,$ will be bounded on $\,[a,b]_{\beta}\,$,
and since $\,g\,$ is continuous at $\,s_0\,$, then $\,u(s_0^+)=u(s_0^-)\,$.
Therefore, we can use (\ref{R-S-1}) and (\ref{Beta-Fundamental-Theorem}) in
the Corollary \ref{corollary-4} to get (\ref{Gruss-type-inequality-5}) 
\end{proof}
\begin{rem}
\begin{enumerate}
\item We stress that (\ref{Gruss-type-inequality-5}) can be written in the
following form:
\begin{equation}\label{Gruss-type-inequality-5-v2}
\left|\frac{1}{b-a}\int_{a}^{b}f(x)\,g(x)\,d_{\beta}x-
\frac{1}{b-a}\int_{a}^{b}f(x)\,d_{\beta}x\,\frac{1}{b-a}
\int_{a}^{b}g(x)\,d_{\beta}x\right|\leq\frac{1}{2}\|g\|_{\infty}(M-m)
\end{equation}
This suggest that, under the conditions on the function $\,g\,$, Corollary
\ref{corollary-5} establishes an upper bound for the absolute value of the
difference, between the $\,\beta$-\emph{average} of the product of two functions
in $\,[a,b]\,$, and the product of the corresponding
$\,\beta$-\emph{averages} of each function in $\,[a,b]\,$.
\item Inequality (\ref{Gruss-inequality}) of Theorem \ref{Gruss-inequality-theorem}
can be compared with (\ref{Gruss-type-inequality-5-v2}).
\end{enumerate}
\end{rem}

\begin{cor}\label{corollary-6}
Let $\,f:\,[a,b]\to\mathbb{R}\,$ be absolutely $\,\beta$-integrable function on
$\,[a,b]\,$, with $\,a\leq s_0\leq b\,$, such that there exist real numbers
$\,m\,$ and $\,M\,$ satisfying
\begin{equation*}
m\leq f(x)\leq M,\quad \mbox{for all}\;\:x\in [a,b]_{\beta}\,.
\end{equation*}
If $\,D_{\beta}f\,$ is bounded on $\,[a,b]_{\beta}\,$ and $\,f(a)\neq f(b)\,$,
then the following inequality
\begin{equation}\label{Gruss-type-inequality-6}
\left|\frac{f(a)+f(b)}{2}-\frac{1}{b-a}\int_{a}^{b}\frac{f(t)+f\big(\beta(t)\big)}{2}\,d_{\beta}t
\right|\leq\frac{1}{2}\frac{\|D_{\beta}f\|_{\infty}}{\left|f(b)-f(a)\right|}(M-m)(b-a)
\end{equation}
holds, being
$\,\displaystyle\|D_{\beta}f\|_{\infty}:=
\sup_{t\in [a,b]_{\beta}}\left|\big(D_{\beta}f\big)(t)\right|\,$.
The constant $\,\frac{1}{2}\,$ is the best possible.
\end{cor}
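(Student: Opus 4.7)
The plan is to apply Corollary \ref{corollary-4} with $u = f$ (the function given in Corollary \ref{corollary-6}) and with integrand $\widetilde{f}(x) := \frac{f(x)+f(\beta(x))}{2}$. First I would verify the hypotheses: since $\beta$ maps $[a,b]_\beta$ into itself, the bound $m \leq f(x) \leq M$ on $[a,b]_\beta$ yields $m \leq \widetilde{f}(x) \leq M$ on $[a,b]_\beta$; $D_\beta f$ is bounded by assumption; and the continuity condition $u(s_0^+) = u(s_0^-)$ becomes $f(s_0^+) = f(s_0^-)$, which I would treat as an implicit requirement (otherwise one falls back on Corollary \ref{corollary-3} and retains the jump term).

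The key identity is produced by the $\beta$-product rule (\ref{beta-product-rule}) applied to $f^2 = f\cdot f$:
$$D_\beta[f^2](t) = \big(f(t)+f(\beta(t))\big)\, D_\beta f(t) = 2\,\widetilde{f}(t)\, D_\beta f(t).$$
Combining (\ref{R-S-1}) with the fundamental theorem of $\beta$-calculus (Theorem B) then gives
$$\int_a^b \widetilde{f}(x)\, d_\beta f(x) = \int_a^b \widetilde{f}(x)\, D_\beta f(x)\, d_\beta x = \frac{1}{2}\int_a^b D_\beta[f^2](x)\, d_\beta x = \frac{f^2(b)-f^2(a)}{2}.$$

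Substituting this value into the conclusion of Corollary \ref{corollary-4} applied to the pair $(\widetilde{f}, f)$, and exploiting the factorization $\frac{f^2(b)-f^2(a)}{2} = \big(f(b)-f(a)\big)\cdot\frac{f(a)+f(b)}{2}$ on the left-hand side, one can pull $f(b)-f(a)$ outside the absolute value. Dividing the resulting inequality by the positive quantity $|f(b)-f(a)|$ produces precisely (\ref{Gruss-type-inequality-6}); the best-possibility of $\frac{1}{2}$ transfers directly from Corollary \ref{corollary-4}, because the factoring-and-dividing step is an identity rather than an estimate.

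The step I anticipate as the main obstacle is the behaviour at $s_0$ when $f$ is not continuous there: Theorem B would then contribute a jump $f^2(s_0^+) - f^2(s_0^-)$ in $\int_a^b D_\beta[f^2](x)\, d_\beta x$, and one would need to check that this jump is compatible with the corresponding $u(s_0^+) - u(s_0^-)$ term that appears through Corollary \ref{corollary-3}. Apart from this bookkeeping at $s_0$, the argument reduces to a one-line application of the $\beta$-product rule followed by a rescaling of Corollary \ref{corollary-4}.
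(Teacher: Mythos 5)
Your proposal is correct and rests on the same two pillars as the paper's own proof: the $\beta$-product rule identity $D_{\beta}[f^2](t)=\big(f(t)+f(\beta(t))\big)D_{\beta}[f](t)$, whose $\beta$-integral evaluates to $f^2(b)-f^2(a)$, and a reduction to Corollary \ref{corollary-4} with integrator $u=f$. The one genuine (if small) difference is in how Corollary \ref{corollary-4} is invoked. The paper applies it twice, once to the integrand $f$ and once to $f\circ\beta$, and glues the two estimates together with the triangle inequality; each contribution carries the constant $\tfrac12$, so the sum gives $\|D_{\beta}f\|_{\infty}(M-m)(b-a)$ before the final division by $2\,|f(b)-f(a)|$. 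You instead apply it once to the averaged integrand $\widetilde f=\tfrac12\big(f+f\circ\beta\big)$, which indeed satisfies $m\le\widetilde f\le M$ on $[a,b]_{\beta}$ because $\beta$ maps $[a,b]_{\beta}$ into itself. Your version is marginally cleaner: it avoids the triangle inequality entirely and lands on the stated constant in one step, and the two routes produce the identical final bound. You also derive the key identity from Theorem B applied to $D_{\beta}[f^2]$ rather than from the integration-by-parts formula (Theorem C) as the paper does; these are equivalent. Finally, your caveat about the behaviour at $s_0$ is well taken: Corollary \ref{corollary-4} with $u=f$ requires $f(s_0^+)=f(s_0^-)$ (and the jump-free form of the fundamental theorem requires continuity at $s_0$), a hypothesis that the statement of Corollary \ref{corollary-6} does not list and that the paper's proof silently assumes, so flagging it is a point in your favour rather than a defect.
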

\begin{proof}
First, by (\ref{beta-product-rule}), one gets
\begin{equation*}
D_{\beta}\left[f^2\right](t)=f(t)\left(D_{\beta}f\right)(t)+
f\big(\beta(t)\big)\left(D_{\beta}f\right)(t)=\left[f(t)+f\big(\beta(t)\big)\right]
\left(D_{\beta}f\right)(t)\,.
\end{equation*}
Then, using the $\,\beta$-integration by parts formula (Theorem C),
\begin{equation*}
\int_{a}^{b}f(t)\left(D_{\beta}f\right)(t)\,d_{\beta}t=f^2(b)-f^2(a)-
\int_{a}^{b}f\big(\beta(t)\big)\left(D_{\beta}f\right)(t)\,d_{\beta}t\,,
\end{equation*}
hence
\begin{equation}\label{Identity-useful}
\int_{a}^{b}\left[f(t)+f\big(\beta(t)\big)\right]\left(D_{\beta}f\right)(t)\,d_{\beta}t=
f^2(b)-f^2(a)\,.
\end{equation}
Notice, also, that
\[\begin{array}{l}
\displaystyle
\left|\int_{a}^{b}\left[f(t)+f\big(\beta(t)\big)\right]\left(D_{\beta}f\right)(t)\,d_{\beta}t
-\frac{f(b)-f(a)}{b-a}
\int_{a}^{b}\left[f(t)+f\big(\beta(t)\big)\right]\,d_{\beta}t\right| \\ [2em]
\displaystyle\leq \left|\int_{a}^{b}f(t)\left(D_{\beta}f\right)(t)\,d_{\beta}t
-\frac{f(b)-f(a)}{b-a}\int_{a}^{b}f(t)\,d_{\beta}t\right|+ \\ [1.2em]
\hspace{5em}\displaystyle\left|\int_{a}^{b}f\big(\beta(t)\big)\left(D_{\beta}f\right)(t)\,d_{\beta}t
-\frac{f(b)-f(a)}{b-a}\int_{a}^{b}f\big(\beta(t)\big)\,d_{\beta}t\right|\,.
\end{array}\]
Now, in the last inequality, we use identity (\ref{Identity-useful}) in the left-hand
side and Corollary \ref{corollary-4} for each absolute value in the right-hand side,
to obtain
\begin{equation*}
\left|f^2(b)-f^2(a)
-\frac{f(b)-f(a)}{b-a}
\int_{a}^{b}\left[f(t)+f\big(\beta(t)\big)\right]\,d_{\beta}t\right|
= \|D_{\beta}f\|_{\infty}(M-m)(b-a)\,,
\end{equation*}
thus, since $\,f(a)\neq f(b)\,$, one easily obtains (\ref{Gruss-type-inequality-6}).
\end{proof}
\begin{rem}
Corollary \ref{corollary-6} is a $\,\beta$-analogue of Corollary 2.3 of
\cite[p. 290]{DF:1998}
\end{rem}

\subsection{The particular cases corresponding the Jackson $\,q$-integral and to
the Jackson-Thomae-N\"orlund $\,(q,\omega)$-integral}

If one considers the case where $\,\beta(t)=qt\,$, with $\,q\,$ fixed in $\,]0,1[\,$,
which correspond to the Jackson operator (\ref{Jackson-operator}) and to the
corresponding inverse operator (the Jackson $\,q$-integral),
Theorem \ref{Gruss-type-Riemann-Stieltjes-theorem} transforms into the following corollary.
\begin{cor}\label{Gruss-type-ineq-Jackson}
Let $\,f:\,[a,b]\to\mathbb{R}\,$ be an absolutely
$\,q$-integrable function on $\,[a,b]\,$, with
$\,a<0<b\,$, and suppose that there exist real numbers
$\,m\,$ and $\,M\,$ such that
\begin{equation*}
m\leq f(x)\leq M,\quad \mbox{for all}\;\:x\in [a,b]_{q}=
\left\{aq^n:\,n\in\mathbb{N}_{0}\right\}\cup\left\{bq^n:\,n\in\mathbb{N}_{0}\right\}
\cup\left\{s_0\right\}\,.
\end{equation*}
If $\,u\,$ is $\,q-L$-Lipschitzian on $\,[a,b]_q\,$ and
$\,\displaystyle\lim_{n\to\infty}u\left(q^{n}x\right)\,$ exist, whenever
$\,x=a\,$ and $\,x=b\,$, then the following inequality
\begin{equation*}
\left|\int_{a}^{b}f(x)\,d_{q}u(x)-\frac{u(b)-u(a)-\left(u(0^+)-u(0^-)\right)}{b-a}
\int_{a}^{b}f(t)\,d_{q}t\right|\leq\frac{1}{2}L(M-m)(b-a)
\end{equation*}
holds, being the constant $\,\frac{1}{2}\,$ the best possible.
\end{cor}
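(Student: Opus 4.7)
The plan is to derive this corollary as a direct specialization of Theorem \ref{Gruss-type-Riemann-Stieltjes-theorem} to the particular choice $\beta(t) = qt$ with $0<q<1$. No new analytic work is required; the task is to verify that the hypotheses of the general theorem are met in this setting and to translate every object that appears in the conclusion.

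First, I would check that the map $\beta(t)=qt$ fits the framework of Subsection \ref{Subsect-2.1}. It is strictly increasing and continuous on $\mathbb{R}$, its unique fixed point is $s_0=0$, and for every $t\in\mathbb{R}$ we have $(t-0)(qt-t)=(q-1)\,t^{2}\le 0$, with equality only at $t=0$, so condition \eqref{condition1} holds. Hence all the constructions of Section \ref{section-2}, together with Theorem \ref{Gruss-type-Riemann-Stieltjes-theorem}, are available for this choice of $\beta$.

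Next I would identify the specialized data. Since $\beta^{k}(x)=q^{k}x$, the set $[a,b]_{\beta}$ equals the set $[a,b]_{q}$ written in the statement of the corollary. The $\beta$-integral $\int_{a}^{b}f\,d_{\beta}$ reduces, by \eqref{beta-integral-a-b-explicitly}, to the Jackson $q$-integral $\int_{a}^{b}f\,d_{q}$, and similarly the Riemann-Stieltjes $\beta$-integral of Definition \ref{Riemann-Stieltjes-beta-integral} collapses to its $q$-analogue $\int_{a}^{b}f\,d_{q}u$. The requirement $a<s_0<b$ becomes $a<0<b$; the one-sided limits at $s_0$ are the one-sided limits at $0$; and the limits $\lim_{n\to\infty}u(\beta^{n}(x))=\lim_{n\to\infty}u(q^{n}x)$ exist at $x=a,b$ by hypothesis. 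The $\beta{-}L$-Lipschitz condition is exactly what the corollary calls being $q{-}L$-Lipschitzian on $[a,b]_{q}$.

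With every hypothesis of Theorem \ref{Gruss-type-Riemann-Stieltjes-theorem} verified, applying that theorem yields precisely inequality displayed in the corollary, together with the sharpness of the constant $\tfrac{1}{2}$ (sharpness being inherited from Step 2 of the proof of that theorem, which uses functions that are meaningful in this $q$-setting as well, provided $0\in[a,b]$). There is no genuine obstacle here; the only point worth being careful about is bookkeeping the one-sided limits at $s_0=0$ and making sure the translation of the set $[a,b]_{\beta}$ matches the explicit description $\{aq^{n}\}\cup\{bq^{n}\}\cup\{0\}$ given in the statement.
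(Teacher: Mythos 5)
Your proposal is correct and follows exactly the route the paper takes: the paper offers no separate proof, presenting the corollary as the immediate specialization of Theorem \ref{Gruss-type-Riemann-Stieltjes-theorem} to $\beta(t)=qt$ with fixed point $s_0=0$. Your verification that this $\beta$ satisfies condition \eqref{condition1} and your translation of $[a,b]_{\beta}$, the $\beta$-integrals, and the one-sided limits at $s_0$ are all accurate, and if anything more explicit than what the paper records.
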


The case $\,\beta(t)=qt+\omega\,$, with $\,q\,$ fixed in $\,]0,1[\,$ and
$\,\omega\geq 0\,$, which correspond to the Hahn's quantum operator and to the
corresponding inverse operator (the Jackson-Thomae-N\"orlund $\,(q,\omega)$-integral),
Theorem \ref{Gruss-type-Riemann-Stieltjes-theorem} becomes the following corollary.
\begin{cor}\label{Gruss-type-ineq-Jackson-Thomae}
Let $\,f:\,[a,b]\to\mathbb{R}\,$ be an absolutely
$\,(q,\omega)$-integrable function on $\,[a,b]\,$, with
$\,a<\omega_0<b\,$, where $\,\omega_0=\frac{\omega}{1-q}\,$, and suppose
that there exist real numbers $\,m\,$ and $\,M\,$ such that
\begin{equation*}
m\leq f(x)\leq M,\quad \mbox{for all}\;\:x\in [a,b]_{q,\omega}\,,
\end{equation*}
with $\,\displaystyle[a,b]_{q,\omega}=
\left\{a\,q^n+\frac{1-q^n}{1-q}\omega:\,n\in\mathbb{N}_{0}\right\}\cup
\left\{b\,q^n+\frac{1-q^n}{1-q}\omega:\,n\in\mathbb{N}_{0}\right\}\cup\left\{s_0\right\}$\,. \\ [0.2em]
If $\,u\,$ is $\,(q,\omega)-L$-Lipschitzian on $\,[a,b]_{q,\omega}\,$ and
$\,\displaystyle\lim_{n\to\infty}u\left(x\,q^n+\frac{1-q^n}{1-q}\omega\right)\,$ exist,
whenever $\,x=a\,$ and $\,x=b\,$, then the following inequality
\begin{equation*}
\left|\int_{a}^{b}f(x)\,d_{q,\omega}u(x)-\frac{u(b)-u(a)-\left(u(\omega_0^+)-u(\omega_0^-)\right)}{b-a}
\int_{a}^{b}f(t)\,d_{q,\omega}t\right|\leq\frac{1}{2}L(M-m)(b-a)
\end{equation*}
holds, being the constant $\,\frac{1}{2}\,$ the best possible.
\end{cor}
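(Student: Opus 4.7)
The plan is to derive Corollary \ref{Gruss-type-ineq-Jackson-Thomae} as a direct specialization of Theorem \ref{Gruss-type-Riemann-Stieltjes-theorem} to the concrete choice $\beta(t)=qt+\omega$, with $q\in\,]0,1[\,$ and $\omega\geq 0$. So the bulk of the work is checking that this $\beta$ fits the hypotheses of Subsection \ref{Subsect-2.1} and then identifying the pieces that appear in the statement of the theorem in terms of the Jackson--Thomae--N\"orlund data.

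First I would verify the required properties of $\beta(t)=qt+\omega$. It is continuous and strictly increasing since $q>0$. Solving $\beta(t)=t$ gives the unique fixed point $s_0=\omega/(1-q)=\omega_0$. A direct computation yields $\beta(t)-t=-(1-q)(t-\omega_0)$, so
\begin{equation*}
(t-s_0)\bigl(\beta(t)-t\bigr)=-(1-q)(t-\omega_0)^2\leq 0,
\end{equation*}
with equality if and only if $t=\omega_0$, which is exactly condition \eqref{condition1}. Hence $\beta$ satisfies the standing assumptions, and the whole $\beta$-calculus developed in Section \ref{section-2}, together with Theorem \ref{Gruss-type-Riemann-Stieltjes-theorem}, applies.

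Next I would identify the concrete form of $\beta^{n}$ and of the $\beta$-interval. By induction one checks that
\begin{equation*}
\beta^{n}(x)=x\,q^{n}+\frac{1-q^{n}}{1-q}\,\omega,\qquad n\in\mathbb{N}_{0}.
\end{equation*}
Substituting $x=a$ and $x=b$ into the general definition of $[a,b]_{\beta}$ and adjoining $s_0=\omega_0$, one obtains precisely the set $[a,b]_{q,\omega}$ stated in the corollary. Moreover $\lim_{n\to\infty}\beta^{n}(x)=\omega_0$ (Proposition A), so the limits $\lim_{n\to\infty}u(\beta^{n}(x))$ for $x=a,b$ are exactly the one-sided limits $u(\omega_0^{-})$ and $u(\omega_0^{+})$ used in the theorem.

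Finally, with $\beta-L$-Lipschitzianity of $u$ translating verbatim into $(q,\omega)$--$L$-Lipschitzianity on $[a,b]_{q,\omega}$, and with the $\beta$-integral reducing to the $(q,\omega)$-integral, all hypotheses of Theorem \ref{Gruss-type-Riemann-Stieltjes-theorem} hold under the assumptions of the corollary. Applying the theorem and rewriting $s_0$ as $\omega_0$ yields the desired inequality, with the constant $1/2$ still being the best possible because the extremal pair from \eqref{particular-choice} (centered at $(a+b)/2$) can again be used (the argument there was independent of the specific form of $\beta$, using only Lipschitz continuity with constant $1$ and continuity of $u$ at the fixed point). There is no genuine obstacle here; the only care needed is the routine identification of $\beta^{n}(x)$ and of the limiting value $\omega_0$, so that the abstract quantities appearing in Theorem \ref{Gruss-type-Riemann-Stieltjes-theorem} are correctly matched with their $(q,\omega)$-counterparts.
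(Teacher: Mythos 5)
Your proposal is correct and follows essentially the same route as the paper, which obtains Corollary \ref{Gruss-type-ineq-Jackson-Thomae} purely by specializing Theorem \ref{Gruss-type-Riemann-Stieltjes-theorem} to $\beta(t)=qt+\omega$; you merely spell out the routine verifications (the fixed point $s_0=\omega_0$, condition (\ref{condition1}), the formula $\beta^{n}(x)=xq^{n}+\frac{1-q^{n}}{1-q}\omega$, and the identification of the limits with $u(\omega_0^{\pm})$) that the paper leaves implicit.
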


\section{Applications}\label{appls}
To begin with, let's take the notation
\begin{equation*}
p_{a,b}:=\frac{1}{b-a}\int_{a}^{b}x\,d_{\beta}x\,,
\end{equation*}
considered in \cite[p. 5]{CS:2024}. Fix $\,a\,$ and $\,b\,$ in $\,\mathbb{R}\,$
such that $\,a<s_0<b\,$. Let $\,X\,$ be a
discrete \emph{random variable} with values in the discrete
\emph{sample space}
$\:S_{\beta}=\left\{\beta^{k}(a):\:k\in\mathbb{N}_0\right\}\cup
\left\{\beta^{k}(b):\:k\in\mathbb{N}_0\right\}\,$,
and suppose that the probability of each element to occur is given by
\begin{equation}\label{probability-of-each-element}
\left\{\begin{array}{c}
p_k(a)\equiv p_k\big(\beta^{k}(a)\big):=\frac{1}{b-a}
\Big(\beta^{k+1}(a)-\beta^{k}(b)\Big)\,,\quad k=0,1,2,\cdots\,, \\ [1em]
p_k(b)\equiv p_k\big(\beta^{k}(b)\big):=\frac{1}{b-a}
\Big(\beta^{k}(b)-\beta^{k+1}(b)\Big)\,,\quad k=0,1,2,\cdots\,.
\end{array}\right.
\end{equation}
Notice that
\[
\begin{array}{l}
\displaystyle
\sum_{k=0}^{\infty}p_k(a)+\sum_{k=0}^{\infty}p_k(b) \; = \; \displaystyle
\sum_{k=0}^{\infty}p_k\big(\beta^{k}(a)\big)+
\sum_{k=0}^{\infty}p_k\big(\beta^{k}(b)\big) \\ [1.5em]
\hspace{5em}  = \; \displaystyle
\frac{1}{b-a}\sum_{k=0}^{\infty}\Big(\beta^{k+1}(a)-\beta^{k}(a)\Big)+
\frac{1}{b-a}\sum_{k=0}^{\infty}\Big(\beta^{k}(b)-\beta^{k+1}(b)\Big) \\ [1.5em]
\hspace{5em}  = \; \displaystyle\frac{1}{b-a}\Big(\lim_{k\to\infty}\beta^{k+1}(a)-a\Big)+
\frac{1}{b-a}\Big(b-\lim_{k\to\infty}\beta^{k+1}(b)\Big) \\ [1.5em]
\hspace{5em} = \; \displaystyle\frac{1}{b-a}\big(s_0-a\big)+\frac{1}{b-a}\big(b-s_0\big)=1 
\end{array}
\]
and, also,
\[
\begin{array}{l}
\displaystyle\sum_{k=0}^{\infty}\beta^{k}(b)\,p_k\big(\beta^{k}(b)\big)+
\sum_{k=0}^{\infty}\beta^{k}(a)\,p_k\big(\beta^{k}(a)\big) \\ [1.5em]
\hspace{2em} = \;\displaystyle
\frac{1}{b-a}\sum_{k=0}^{\infty}\beta^{k}(b)\Big(\beta^{k}(b)-\beta^{k+1}(b)\Big)-
\frac{1}{b-a}\sum_{k=0}^{\infty}\beta^{k}(a)\Big(\beta^{k}(a)-\beta^{k+1}(a)\Big) \\ [1.5em]
\hspace{2em} = \;\displaystyle
\frac{1}{b-a}\int_{s_0}^{b}x\,d_qx-\frac{1}{b-a}\int_{s_0}^{a}x\,d_qx\,=\,
\frac{1}{b-a}\int_{a}^{b}x\,d_qx\,=\,p_{a,b}\,. 
\end{array}
\]
Therefore, for the particular choice made in (\ref{probability-of-each-element}),
$\,\displaystyle p_{a,b}\,$ represents the \emph{expected value} of the
\emph{random variable} $\,X\,$ whose values are in the given \emph{sample space} $\,S_{\beta}\,$.

Under the above setting, suppose that $\,f\,$ and $\,g\,$ are two functions
defined on $\,S_{\beta}\,$, both $\,\beta$-integrable on $\,[a,b]\,$. Then,
$$\frac{1}{b-a}\int_{a}^{b}f(x)\,d_qx\;,\quad\frac{1}{b-a}\int_{a}^{b}g(x)\,d_qx
\quad\mbox{and}\quad\frac{1}{b-a}\int_{a}^{b}f(x)\,g(x)\,d_qx\,,$$
gives the \emph{expected values} of $\,f\big(X\big)\,$, $\,g\big(X\big)\,$ and
$\,(f\,g)\big(X\big)\,$, respectively, when $\,X\,$ runs throughout the
\emph{sample space} $\,S_{\beta}\,$.
Within this scenario, if we admit that $\,f\,$ and $\,g\,$, respectively, satisfy
(\ref{inequalities-hypothesis}) of Theorem \ref{Gruss-inequality-theorem}, then we obtain
the inequality
\begin{equation}\label{statistic-Gruss-inequality-1}
\left|\frac{1}{b-a}\int_{a}^{b}\!f(x)\,g(x)\,d_{\beta}x-
\frac{1}{(b-a)^2}\int_{a}^{b}\!f(x)\,d_{\beta}x\int_{a}^{b}\!g(x)\,d_{\beta}x\right|
\leq\frac{1}{4}(M-m)(N-n)\,,
\end{equation}
which compares the \emph{expected value} of $\,(f\,g)\big(X\big)\,$ with the product of the
\emph{expected values} of $\,f\big(X\big)\,$ and $\,g\big(X\big)\,$.

Rewriting (\ref{statistic-Gruss-inequality-1}) as
\[
\begin{array}{l}
\displaystyle
\frac{1}{(b-a)^2}\int_{a}^{b}\!f(x)\,d_{\beta}x\int_{a}^{b}\!g(x)\,d_{\beta}x-
\frac{1}{4}(M-m)(N-n)\leq\frac{1}{b-a}\int_{a}^{b}\!f(x)\,g(x)\,d_{\beta}x \\ [1.5em]
\hspace{3em}\displaystyle
\leq\frac{1}{(b-a)^2}\int_{a}^{b}\!f(x)\,d_{\beta}x\int_{a}^{b}\!g(x)\,d_{\beta}x+
\frac{1}{4}(M-m)(N-n)\,,
\end{array}
\]
and, additionally, if $\,f\,$ and $\,g\,$ are convex function on $\,[a,b]\,$,
then, by the $\,\beta$-Hermite-Hadamard Theorem 3.1 of \cite[p.7]{CS:2024},
we obtain
\begin{equation}\label{statiscal-example}
\begin{array}{l}
\displaystyle
f\left(p_{a,b}\right)\,g\left(p_{a,b}\right)-
\frac{1}{4}(M-m)(N-n)\leq\frac{1}{b-a}\int_{a}^{b}\!f(x)\,g(x)\,d_{\beta}x\leq \\ [1.5em]
\displaystyle
\!\left[\!\left(1-\frac{p_{a,b}-a}{b-a}\right)\!f(a)+\frac{p_{a,b}-a}{b-a}f(b)\right]
\!\left[\left(1-\frac{p_{a,b}-a}{b-a}\right)\!g(a)+\frac{p_{a,b}-a}{b-a}g(b)\right] \\ [1.5em]
+\frac{1}{4}(M-m)(N-n)\,,
\end{array}
\end{equation}
which gives a lower and an upper bound for the \emph{expected
value} of $\,(f\,g)\big(X\big)\,$, when $\,X\,$ runs throughout
the \emph{sample space} $\,S_{\beta}\,$.

In the particular case of the Jackson $\,q$-integral, where
$\,\beta(t)=qt\,$, with $\,q\,$ fixed in the interval $\,]0,1[\,$,
we have $\,p_{\hspace{0.05em}a,b}=\frac{a+b}{1+q}\,$. Thus,
(\ref{statiscal-example}) becomes
\begin{equation*}
\begin{array}{l}
\displaystyle
f\left(\frac{a+b}{1+q}\right)\,g\left(\frac{a+b}{1+q}\right)-
\frac{1}{4}(M-m)(N-n)\leq\frac{1}{b-a}\int_{a}^{b}\!f(x)\,g(x)\,d_{q}x\leq \\ [1.5em]
\displaystyle
\!\left[\frac{-a+qb}{(1+q)(b-a)}f(a)+\frac{-qa+b}{(1+q)(b-a)}f(b)\right]\!
\left[\frac{-a+qb}{(1+q)(b-a)}g(a)+\frac{-qa+b}{(1+q)(b-a)}g(b)\right] \\ [1.5em]
+\frac{1}{4}(M-m)(N-n)\,.
\end{array}
\end{equation*}

\end{document}